\definecolor{gr}{rgb}   {0.,   0.69,   0.23 }
\definecolor{bl}{rgb}   {0.,   0.5,   1. }
\definecolor{mg}{rgb}   {0.85,  0.,    0.85}
\definecolor{yl}{rgb}   {0.8,  0.7,   0.}
\definecolor{or}{rgb}  {0.7,0.2,0.2}
\tikzset{
	ddot/.style={circle,fill=white,draw=black,inner sep=0pt,minimum size=0.8mm},
	>=stealth,
	}
\tikzset{
	ddot2/.style={circle,fill=black,draw=black,inner sep=0pt,minimum size=0.8mm},
	>=stealth,
	}
\newtheorem{theorem}{Theorem} [section]
\newtheorem{lemma}[theorem]{Lemma}
\newtheorem{proposition}[theorem]{Proposition}
\newtheorem{remark}[theorem]{Remark}
\newtheorem{corollary}[theorem]{Corollary}
\newcommand{\noi}{\noindent}
\newcommand{\Z}{\mathbb{Z}}
\newcommand{\R}{\mathbb{R}}
\newcommand{\T}{\mathbb{T}}
\let\P= \undefined
\newcommand{\P}{\mathbf{P}}
\newcommand{\E}{\mathbb{E}}
\renewcommand{\L}{\mathcal{L}}
\newcommand{\F}{\mathcal{F}}
\newcommand{\al}{\alpha}
\newcommand{\dl}{\delta}
\newcommand{\nb}{\nabla}
\newcommand{\Dl}{\Delta}
\newcommand{\eps}{\varepsilon}
\newcommand{\G}{\Gamma}
\newcommand{\ld}{\lambda}
\newcommand{\s}{\sigma}
\newcommand{\ft}{\widehat}
\newcommand{\cj}{\overline}
\newcommand{\dt}{\partial_t}
\newcommand{\ta}{\theta}
\renewcommand{\l}{\ell}
\renewcommand{\o}{\omega}
\renewcommand{\O}{\Omega}
\newcommand{\les}{\lesssim}
\newcommand{\jb}[1]
{\langle #1 \rangle}
\newcommand{\jbb}[1]
{[\hspace{-0.6mm}[ #1 ]\hspace{-0.6mm}]}
\newcommand{\ind}{\mathbf 1}
\newcommand{\N}{\mathbb{N}}
\renewcommand{\H}{\mathcal{H}}
\newtheorem*{ackno}{Acknowledgements}
\newcommand{\too}{\longrightarrow}
\newcommand{\rhoo}{\vec{\rho}}
\newcommand{\muu}{\vec{\mu}}
\newcommand{\deff}{\stackrel{\textup{def}}{=}}
\newcommand{\Id}{\textup{Id}}
\newcommand{\wick}[1]{:\!{#1}\!:}
\numberwithin{equation}{section}
\numberwithin{theorem}{section}
\begin{document}
\baselineskip = 14pt

\title[On the 2-$d$ stochastic viscous NLW]
{On the two-dimensional singular stochastic viscous nonlinear wave equations}

\author[R.~Liu and  T.~Oh]
{Ruoyuan Liu and  Tadahiro Oh}

\address{
Ruoyuan Liu,  School of Mathematics\\
The University of Edinburgh\\
and The Maxwell Institute for the Mathematical Sciences\\
James Clerk Maxwell Building\\
The King's Buildings\\
Peter Guthrie Tait Road\\
Edinburgh\\ 
EH9 3FD\\
 United Kingdom}

\email{ruoyuan.liu@ed.ac.uk}


\address{
Tadahiro Oh, School of Mathematics\\
The University of Edinburgh\\
and The Maxwell Institute for the Mathematical Sciences\\
James Clerk Maxwell Building\\
The King's Buildings\\
Peter Guthrie Tait Road\\
Edinburgh\\ 
EH9 3FD\\
 United Kingdom}

\email{hiro.oh@ed.ac.uk}

\subjclass[2020]{35L71, 60H15}
\keywords{stochastic viscous nonlinear wave equation; viscous nonlinear wave equation; 
 Gibbs measure}
%


\begin{abstract}
We 
study the stochastic 
viscous nonlinear wave equations (SvNLW) on $\T^2$, 
forced by a fractional derivative of the space-time white noise $\xi$.
In particular, we consider SvNLW with the singular additive forcing $D^\frac{1}{2}\xi$
such that solutions are expected to be merely distributions.
By introducing an appropriate renormalization, 
we prove local well-posedness of SvNLW.
By establishing an energy bound via a Yudovich-type argument, 
we also prove pathwise global well-posedness of the defocusing cubic SvNLW.
Lastly, 
in  the defocusing case, we prove almost sure global well-posedness
of SvNLW with respect to certain  Gaussian random initial data.

\end{abstract}

%
%
%
%
%
%

\maketitle

%
%


\tableofcontents

\section{Introduction}
\label{SEC:1}

\subsection{Stochastic viscous nonlinear wave equations}

In  \cite{KC1}, 
Kuan and \v{C}ani\'c 
proposed the following wave equation on $\R^2$
augmented by the viscous effect:
\begin{align}
\dt^2 u - \Dl  u  + 2\mu D \dt u  = F_\text{ext}(u), 
\label{NLW1}
\end{align}

\noi
where $\mu > 0$ is a constant,  $D = |\nb| = \sqrt{-\Dl}$, 
and 
$F_\text{ext}(u)$ 
denotes an external forcing, which may in general depend on the unknown $u$.
The equation \eqref{NLW1}
appears
in the study of 
 fluid-structure interaction 
 in the three-dimensional space where the Dirichlet-Neumann operator models the coupling between a viscous, incompressible fluid and an elastic structure.
Here, 
 the viscosity term $2\mu D \dt u$
in~\eqref{NLW1}
represents the effect of the Cauchy stress tensor
of Newtonian fluid
in the vertical direction (namely, in $z$-direction).
See \cite{KC1} for the derivation of \eqref{NLW1}.

The general solution to  the homogeneous linear viscous wave equation:
\begin{align}
\dt^2 u - \Dl  u  + 2\mu D \dt u  = 0
\label{NLW}
\end{align}

\noi
is given by 
\[
 u(t) =  e^{(-\mu |\nb|  + \sqrt{(\mu^2 - 1) |\nb|^2})t }
f_1 + e^{(-\mu |\nb|  - \sqrt{(\mu^2 - 1) |\nb|^2})t } f_2.
\]

\noi
When $\mu\geq 1$, we have 
$-\mu |\xi|  + \sqrt{(\mu^2 - 1) |\xi|^2}\sim - \mu^{-1} |\xi|$
and thus the equation \eqref{NLW1} is purely of parabolic type.
In this case, we can study well-posedness
of \eqref{NLW1}, 
simply by using the Schauder estimate
for the Poisson kernel (see Lemma \ref{LEM:Sch} below).
On the other hand, when $0 < \mu < 1$, 
the solution to \eqref{NLW} with initial data $(u, \dt u) |_{t = 0} = (u_0, u_1)$
is given by 
\begin{align}
\begin{split}
u & = 
 e^{-\mu D t} \bigg(\cos (\sqrt{1 - \mu^2} D t ) 
+ \frac{\mu }{\sqrt{1 - \mu^2} }\sin (\sqrt{1 - \mu^2} D t )  \bigg)u_0\\
& \quad + e^{-\mu D t} \, \frac{\sin (\sqrt{1 - \mu^2} D t ) }{\sqrt{1- \mu^2} D} u_1, 
\end{split}
\label{NLW2}
\end{align}

\noi
and thus 
we see that 
the equation exhibits an interesting mixture
of the wave dispersion and the parabolic regularization 
by the fluid viscosity.
For this reason, we will restrict our attention to $0 < \mu < 1$.
Note that, when $0 < \mu < 1$, 
 a precise value of $\mu$ (and $\sqrt{1 - \mu^2 }$)
 in~\eqref{NLW2}
does not play any important role in terms of the well-posedness theory, 
and thus 
without loss of generality, we set $\mu = \frac 12$ 
in the remaining part of the paper.
See also Footnote \ref{FT:mu} below.

In \cite{KC1}, 
Kuan and \v{C}ani\'c studied 
well-posedness and ill-posedness of the following viscous nonlinear wave equation (vNLW)
on $\R^2$:
\begin{align*}
\dt^2 u - \Dl  u  + D \dt u  + u^k  = 0
\end{align*}

\noi
in both the deterministic and probabilistic settings
(in particular with random initial data).
See also \cite{CKO, Liu}.
In a recent preprint \cite{KC2}, 
Kuan and \v{C}ani\'c also studied the following stochastic viscous wave equation
with a multiplicative noise
on $\R^d$, $d = 1, 2$:
\begin{align*}
\dt^2 u - \Dl  u  + D \dt u  = f(u) \xi, 
\end{align*}

\noi
where $f$ is a Lipschitz function and $\xi$ denotes the (Gaussian) space-time white noise on $\R_+\times \R^2$.


In this paper, we consider   the  following stochastic vNLW (SvNLW)
with an additive stochastic forcing
on the two-dimensional torus $\T^2 = (\R/\Z)^2$:
\begin{align}
\dt^2 u -\Dl u + D \dt u + u^k
=  D^\al \xi
\label{SNLW1}
\end{align}

\noi
where $\al \geq  0$ and 
 $\xi$ denotes the (Gaussian) space-time white noise on $\R_+\times \T^2$.
By a standard argument (see, for example, Lemma \ref{LEM:Psi} below), 
we see
 that the stochastic convolution $\Psi$,  satisfying 
\begin{align*}
\dt^2 \Psi -\Dl \Psi + D \dt \Psi
=  D^\al \xi
\end{align*}

\noi
(say, with the zero initial data), 
is almost surely a continuous function on 
$\R_+\times \T^2$,  when $\al < \frac 12$.
It is worthwhile to note that 
a combination of the wave dispersion and the dissipation
by the fluid viscosity
yields $\frac 32$-smoothing on the noise (rather than 
the usual one degree of smoothing
for stochastic heat equations \cite{DPD, MW2}
and stochastic wave equations~\cite{GKO, OOk}).
For this reason, 
 we set  $\al = \frac 12$  in this paper and study 
the following Cauchy problem for SvNLW on $\T^2$:
\begin{align}
\begin{cases}
\dt^2 u + (1-\Dl) u + D \dt u + u^k
=  \sqrt 2 D^\frac{1}{2} \xi\\
(u,\dt u)|_{t = 0}=(u_0,u_1).
\end{cases}
\label{SNLW3}
\end{align}

\noi
In this case, the corresponding stochastic convolution 
is merely a distribution and thus we need to introduce a proper renormalization
to give a precise meaning to the equation.

\begin{remark}\label{REM:Gibbs}\rm
In \eqref{SNLW3}, 
we replaced $-\Dl$ 
by $1-\Dl$. This modifications simplifies
part of the argument (so that we do not need to make a separate analysis
at the zeroth frequency). 
Furthermore, 
this modification, along with the extra factor $\sqrt 2$, 
is necessary for the almost sure global well-posedness result
(Theorem \ref{THM:GWP2}).
Note that Theorems \ref{THM:LWP} and \ref{THM:GWP1}
apply to~\eqref{SNLW1} with $\al = \frac 12$
with essentially identical proofs.

\end{remark}

\subsection{Renormalized SvNLW}\label{SUBSEC:1.2}
In this subsection, we briefly go over the renormalization procedure for \eqref{SNLW3}, 
following the discussion in \cite{GKO, OOk, GKOT}.
Let $\Psi$ be the solution to the following linear stochastic viscous wave equation:
\begin{align}
\begin{cases}
\dt^2 \Psi + (1-\Dl) \Psi + D \dt \Psi
= \sqrt{2} D^\frac{1}{2} \xi\\
(\Psi,\dt \Psi)|_{t = 0}=(0,0).
\end{cases}
\label{SNLW4}
\end{align}

\noi
By writing in the Duhamel formulation (= mild formulation), 
the stochastic convolution $\Psi$ can be expressed as
\begin{equation}
\Psi (t) =  \sqrt{2} \int_0^t S(t-t') D^\frac{1}{2} dW(t'),
\label{psi1}
\end{equation}

\noi
where  the linear propagator $S(t)$ is defined by
\begin{equation}
S(t) = e^{-\frac{D}{2} t} \frac{\sin (t \jbb{D} )}{\jbb{D}}\qquad 
\text{with }
\jbb{D} = \sqrt{1-\tfrac{3}{4}\Dl}
\label{S1}
\end{equation}

\noi
and 
  $W$ denotes a cylindrical Wiener process on $L^2(\T^2)$:\footnote{Hereafter, we drop the harmless factor $2\pi$.} 
\begin{align*}
W(t)
 = \sum_{n \in \Z^2 } B_n (t) e_n.
\end{align*}

\noi
Here, $e_n(x) = e^{2\pi i n \cdot x}$
and 
$\{ B_n \}_{n \in \Z^2}$ 
is defined by 
$B_n(t) = \jb{\xi, \ind_{[0, t]} \cdot e_n}_{t, x}$, 
where  $\jb{\cdot, \cdot}_{t, x}$ denotes 
the duality pairing on $\R_+ \times \T^2$.
As a result, 
we see that $\{ B_n \}_{n \in \Z^2}$ is a family of mutually independent complex-valued\footnote
{In particular, $B_0$ is  a standard real-valued Brownian motion.
Note that we have, for any $n \in \Z^2$,  
 \[\text{Var}(B_n(t)) = \E\big[
 \jb{\xi, \ind_{[0, t]} \cdot e_n}_{t, x}\cj{\jb{\xi, \ind_{[0, t]} \cdot e_n}_{t, x}}
 \big] = \|\ind_{[0, t]} \cdot e_n\|_{L^2_{t, x}}^2 = t.\]
} 
Brownian motions conditioned so that $B_{-n} = \cj{B_n}$, $n \in \Z^2$.

Given $N \in \N$, we define the truncated stochastic convolution $\Psi_N = \P_N \Psi$, where $\P_N$ denotes the frequency cutoff onto the spatial frequencies $\{ |n| \leq N \}$. Then, 
for each fixed $t \geq 0$ and $x \in \T^2$, 
a direct computation shows that
 $\Psi_N (t,x)$ is a mean-zero real-valued Gaussian random variable with variance
\begin{align}
\begin{split}
\s_N (t)
& \deff  \E \big[\Psi_N (t,x)^2\big]
= 2 \sum_{\substack{n \in \Z^2\\ |n| \leq N}}
\int_0^t 
e^{- (t - t')|n|} \bigg[\frac{\sin ((t-t')\jbb{n} )}{\jbb{n}} \bigg]^2 |n| dt'\\
& =  \sum_{\substack{n \in \Z^2\\ |n| \leq N}}
\frac{1}{\jb{n}^2} -   \frac{e^{-t|n|}}{\jbb{n}^2}
\bigg( 1 -   \frac{ |n|^2}{4\jb{n}^2} \cos (2\jbb{n}t)
+    \frac{ |n|\jbb{n}}{2\jb{n}^2} \sin (2\jbb{n}t)\bigg)
\\
&  \sim \sum_{\substack{n \in \Z^2\\ |n| \leq N}} \frac{1}{\jb{n}^{2}} \sim \log N
\too \infty, 
\end{split}
\label{sig1}
\end{align}

\noi
as $N \to \infty$, where 
\[ \jb{n} = ( 1 + |n|^2)^\frac{1}{2}\qquad
\text{and} \qquad \jbb{n} = \sqrt{1+\tfrac{3}{4}|n|^2}\, .\]

\noi
From this computation, we see  that $\{\Psi_N(t)\}_{N \in \N}$
is almost surely unbounded in $W^{0, p}(\T^2)$ for any $1 \leq p \leq \infty$.

Let us now consider   the truncated SvNLW with the regularized noise:
\begin{align}
\begin{cases}
\dt^2 u_N + (1-\Dl) u_N + D \dt u_N + u_N^k
= \sqrt{2} D^\frac{1}{2} \P_N \xi\\
(u_N,\dt u_N)|_{t = 0}=(u_0,u_1).
\end{cases}
\label{SNLW5}
\end{align}

\noi
Proceeding with 
the first order expansion (\cite{McKean, BO96, DPD}):
\begin{equation}
u_N = \Psi_N + v_N, 
\label{decomp1}
\end{equation}

\noi
we see that the residual term $v_N$ satisfies the following equation:
\begin{equation}
\dt^2 v_N + (1-\Dl) v_N + D \dt v_N + \sum_{\ell = 0}^k \binom{k}{\ell} \Psi_N^\ell v_N^{k-\ell} = 0.
\label{SNLW6}
\end{equation}

\noi
Note that the power $\Psi_N^\ell$ does not converge to any limit as $N \to \infty$. 
This is where we introduce the Wick renormalization:
\begin{equation}
\wick{\Psi_N^\ell (t,x)} \, \deff H_\ell (\Psi_N(t,x); \s_N(t)),
\label{Wick1}
\end{equation}

\noi
where $H_\ell (x,\s)$ is the Hermite polynomial of degree $\ell$ with variance parameter $\s$. See Subsection \ref{SUBSEC:sto}. 
This yields the renormalized version of \eqref{SNLW6}:
\begin{equation}
\dt^2 v_N + (1-\Dl) v_N + D \dt v_N + \sum_{\ell = 0}^k \binom{k}{\ell} \wick{\Psi_N^\ell} v_N^{k-\ell} = 0.
\label{SNLW6a}
\end{equation}

\noi
In Lemma \ref{LEM:Psi}, we show that the Wick power $\wick{\Psi_N^\ell}$ converges to a limit $\wick{\Psi^\ell}$ in $C([0,T];W^{-\eps,\infty} (\T^2))$ for any $\eps > 0$ and $T > 0$, almost surely.
Then, 
by taking $N \to \infty$, we obtain the limiting equation:
\begin{equation}
\dt^2 v + (1-\Dl) v + D \dt v + \sum_{\ell = 0}^k \binom{k}{\ell} \wick{\Psi^\ell} v^{k-\ell} = 0.
\label{SNLW7}
\end{equation}

At the level of $u_N$, in view of \eqref{decomp1}, 
we define the renormalized nonlinearity   $\wick{u_N^k}$ by 
\begin{align}
\wick{u_N^k} \, = \, \wick{(\Psi_N + v_N)^k} \, = \sum_{\ell = 0}^k \binom{k}{\ell} \wick{\Psi_N^\ell} v_N^{k-\ell}.
\label{Wick2}
\end{align}

\noi
Then, if $v_N$ solves \eqref{SNLW6a}, 
then $u_N = \Psi_N + v_N$ satisfies  the following truncated renormalized SvNLW:
\begin{align}
\dt^2 u_N + (1-\Dl) u_N + D \dt u_N \, + \wick{u_N^k}\, 
= \sqrt{2} D^\frac{1}{2} \P_N \xi.
\label{SNLW8}
\end{align}

\noi
Similarly, if  $v$ solves \eqref{SNLW7}, 
then $u = \Psi + v$ satisfies  the following  renormalized SvNLW:
\begin{align}
\dt^2 u + (1-\Dl) u + D \dt u \, \, +  \wick{u^k}\, 
= \sqrt{2} D^\frac{1}{2}  \xi, 
\label{SNLW9}
\end{align}

\noi
where the renormalized nonlinearity 
$\wick{u^k}$  is defined as in \eqref{Wick2} (by dropping the subscript~$N$).

\subsection{Main results}

Our main goal is to study well-posedness of the renormalized SvNLW \eqref{SNLW9}.
More precisely, we study the following Duhamel 
formulation of \eqref{SNLW7} endowed with initial data 
$(v, \dt v)|_{t = 0} = (u_0, u_1)$:
\begin{equation}
v(t) = V(t)(u_0, u_1) - \sum_{\ell = 0}^k \binom{k}{\ell}
\int_0^t S(t - t') 
 \wick{\Psi^\l} v^{k-\l} (t') dt', 
\label{SNLW10}
\end{equation}

\noi
where the linear propagator $V(t)$  is defined by 
\begin{equation}
V(t) (u_0, u_1)
= e^{-\frac{D}{2} t} \bigg(\cos (t\jbb{D} ) 
+ \frac{D}{2 \jbb{D}}\sin (t \jbb{D} ) \bigg)u_0
+ e^{-\frac{D}{2} t} \frac{\sin (t\jbb{D} )}{\jbb{D}} u_1.
\label{S2}
\end{equation}

\noi
Then, given the almost sure regularity of the Wick powers
$ \wick{\Psi^\l}$, 
standard deterministic analysis  
yields the following local well-posedness result.

\begin{theorem}\label{THM:LWP}
Let $k\geq 2$ be  an integer and $s \geq 1$.   Then, the renormalized SvNLW \eqref{SNLW9}
is locally well-posed in $\H^s(\T^2) = H^s(\T^2) \times H^{s-1}(\T^2)$
in the  sense that the following statement holds true almost surely; 
given $(u_0, u_1) \in \H^s(\T^2)$, 
there exists a unique local-in-time solution $v$ to \eqref{SNLW7}
with initial data $(v, \dt v)|_{t = 0} = (u_0, u_1)$, 
where $\{\,\wick{\Psi^\ell}\,\}_{\l = 1}^k$
denotes the stochastic convolution $\Psi$ defined in~\eqref{psi1}
and its Wick powers 
$\wick{\Psi^\ell}$, $\l = 2, \dots, k$,  
defined in Lemma~\ref{LEM:Psi} below.

Furthermore, 
there exists
 an almost surely positive stopping time
 $T = T(\o)$ such that 
the solution $u_N = \Psi_N + v_N$ to the truncated renormalized SvNLW \eqref{SNLW8} 
\textup{(}and its time derivative $\dt u_N$\textup{)} converges
to the solution $u = \Psi + v$ to \eqref{SNLW9} \textup{(}constructed above\textup{)} in $C([0, T]; H^{-\eps}(\T^2))$ 
\textup{(}and to $\dt u$ in $C([0, T]; H^{-1-\eps}(\T^2)), respectively$\textup{)}, $\eps > 0$, almost surely, 
as $N \to \infty$.
Here, 
$v_N$ denotes the solution to \eqref{SNLW6a}  
with $(v_N, \dt v_N)|_{t = 0} = (u_0, u_1)$.

\end{theorem}

See Proposition \ref{PROP:LWP2} below 
for the local well-posedness statement at the level of the 
residual term $v = u - \Psi$, satisfying  \eqref{SNLW10}.
We point out that  the regularity of initial data can be lowered
but we do not pursue this issue here.  See Remark \ref{REM:LWP3}.

\medskip

Next, we turn our attention to the global well-posedness problem.
In the cubic case ($k = 3$), 
we have the following pathwise global well-posedness result. 

\begin{theorem}\label{THM:GWP1}
Let $k = 3$  and $s \geq 1$.   Then, the renormalized cubic SvNLW \eqref{SNLW9}
is globally well-posed in $\H^s(\T^2)$
in the  sense that the following statement holds true almost surely; 
given $(u_0, u_1) \in \H^s(\T^2)$, 
there exists a unique global-in-time solution $v$ to \eqref{SNLW7}
with initial data $(v, \dt v)|_{t = 0} = (u_0, u_1)$, 
where $\Psi$, $\wick{\Psi^2}$, and $\wick{\Psi^3}$ 
are the stochastic convolution $\Psi$ defined in~\eqref{psi1}
and its Wick powers
defined in Lemma \ref{LEM:Psi} below.

\end{theorem}

In proving  Theorem \ref{THM:GWP1}, 
we study 
\eqref{SNLW7} with $k = 3$:
 \begin{equation}
\dt^2 v  + (1 - \Dl) v +  
D \dt v + 
v^3 + 3 v^2 \Psi +  3v :\!\Psi^2\!: + :\!\Psi^3\!: \, = 0.
\label{SNLW11}
\end{equation}

\noi
From the proof of Theorem \ref{THM:LWP}, 
we see that it suffices to control the $\H^1$-norm of $\vec v(t)  \deff  (v(t), \dt v(t))$.
For this purpose, we study the evolution of the energy (with $k = 3$)
\begin{equation}
E(\vec v) = \frac{1}{2} \int_{\T^2} \big( v^2 + |\nb v|^2 \big) dx + \frac{1}{2} \int_{\T^2} (\dt v)^2 dx + \frac{1}{k+1} \int_{\T^2} v^{k+1} dx
\label{E1}
\end{equation}

\noi
for the standard  nonlinear wave equation (NLW):
\begin{align}
\dt^2 u + (1-\Dl) u  + u^k = 0.
\label{NLW5}
\end{align}

\noi
As in the case of the stochastic NLW studied in \cite{GKOT}, 
the energy $E(\vec v)$ is not conserved under~\eqref{SNLW11}
due to the singular perturbative term
$3 v^2 \Psi +  3v :\!\Psi^2\!: + :\!\Psi^3\!: \, $.
For our problem, the dissipation by the viscous term 
comes in rescue and allows us to establish 
a double exponential growth bound on $E(\vec v)$ via 
a Yudovich-type argument \cite{Yud, BG}.
See
Section~\ref{SEC:GWP1} for details.
In \cite{BT3}, Burq and Tzvetkov 
 used an analogous 
Yudovich-type argument 
and proved 
probabilistic global well-posedness of
 the defocusing cubic
NLW,  \eqref{NLW5} with $k = 3$,  on the three-dimensional torus $\T^3$
with randomized initial data in $L^2(\T^3)$.
A key difference between Theorem \ref{THM:GWP1}
and \cite{BT3} is that, thanks to the dissipative smoothing effect, 
we can handle data (namely, the stochastic convolution and its Wick powers) of 
slightly negative regularity.

\medskip

Lastly, we consider global well-posedness of \eqref{SNLW9} with random initial data.
More precisely, consider a pair $(u_0^\o, u_1^\o)$ of random functions defined by 
\begin{equation}
u_0^\o = \sum_{n \in \Z^2} \frac{g_n(\o)}{\jb{n}}e_n
\qquad\text{and}\qquad
u_1^\o = \sum_{n \in \Z^2} h_n(\o)e_n.
\label{IV1}
\end{equation}

\noi
Here, $\{g_n,h_n\}_{n\in\Z^2}$ is a family of independent standard complex-valued Gaussian random variables such that $\cj{g_n}=g_{-n}$ and $\cj{h_n}=h_{-n}$, 
 $n \in \Z^2$.
We assume that 
$\{g_n,h_n\}_{n\in\Z^2}$ is independent from the space-time white noise $\xi$ in \eqref{SNLW9}.
A standard computation shows that 
$(u_0^\o, u_1^\o) \in \H^s(\T^2) \setminus \H^0(\T^2)$
for any $s < 0$, almost surely.  In particular, 
$(u_0^\o, u_1^\o)$ in~\eqref{IV1} is much rougher
than the $\H^1$-initial data considered in Theorems \ref{THM:LWP} and \ref{THM:GWP1}.
Our goal is to prove almost sure global well-posedness of \eqref{SNLW9}
 with respect to the Gaussian random initial data 
$(u_0^\o, u_1^\o)$ in \eqref{IV1}.

For this purpose,  let us first define
the stochastic convolution $\Phi$
with the Gaussian random initial data $(u_0^\o,u_1^\o)$ in~\eqref{IV1}:
\begin{align}
\begin{cases}
\dt^2 \Phi + (1-\Dl) \Phi + D \dt \Phi
= \sqrt{2} D^\frac{1}{2} \xi\\
(\Phi,\dt \Phi)|_{t = 0}=(u_0^\o,u_1^\o).
\end{cases}
\label{phi1}
\end{align}

\noi
By writing \eqref{phi1} in the Duhamel formulation, we have
\begin{align} 
\Phi (t) 
 = V(t)(u_0^\o,u_1^\o)
+   \sqrt{2} \int_0^t S(t-t') D^\frac{1}{2} dW(t'),
\label{phi2}
\end{align}   

\noi
where $V(t)$ is as in \eqref{S2}.
A direct computation with \eqref{sig1} and \eqref{IV1} shows that $\Phi_N(t, x)=\P_N\Phi( t, x)$
 is a mean-zero real-valued Gaussian random variable with variance
\begin{align}
\begin{split}
\al_N & \deff  \E \big[\Phi_N(t, x)^2\big] = 
 \E\big[\big(\P_NV(t)(u_0^\o,u_1^\o)(x) \big)^2\big]
 + 
 \E\big[\big(\P_N\Psi(t, x)\big)^2\big]\\
&  =\sum_{\substack{n\in\Z^2\\|n|\leq N}}\frac{1}{\jb{n}^2}
\sim \log N 
\end{split}
 \label{sig2}
\end{align}

\noi
for any $t\ge 0$, $x\in\T^2$,  and $N \ge 1$.
Note  that,  unlike $\s_N(t)$ in \eqref{sig1}, 
the variance $\al_N$ is time independent.
This is due to the fact that 
the distribution of $( \Phi_N(t), \dt  \Phi_N(t))$
is invariant under the linear dynamics \eqref{phi1}.
As in \eqref{Wick1}, we then define 
 the Wick power by 
\begin{align}
:\!\Phi_N^\l ( t, x) \!:
 \, \deff  H_{\l} (\Phi_N( t, x); \al_N)
\label{Wick3}
\end{align}

\noi
for $k \in \N$.
As before, it follows that the Wick power $\wick{\Phi_N^\ell}$ converges to a limit $\wick{\Phi^\ell}$ in $C([0,T];W^{-\eps,\infty} (\T^2))$ for any $\eps > 0$ and $T > 0$, almost surely;
see Lemma \ref{LEM:Psi}.
Then, by proceeding as in Subsection 
\ref{SUBSEC:1.2}, 
namely, by (i) first considering the truncated equation~\eqref{SNLW5}
with the random initial data $(u_0^\o, u_1^\o)$ in \eqref{IV1}, 
(ii) using 
the first order expansion 
$u_N = \Phi_N + v_N$
and introducing Wick renormalizations,
and (iii) taking a limit $N \to \infty$, 
we arrive at the following (renormalized) reformulation of \eqref{SNLW9}
in this setting:
\begin{equation}
\begin{cases}
\dt^2 v + (1-\Dl) v + D \dt v + \sum_{\ell = 0}^k \binom{k}{\ell} \wick{\Phi^\ell} v^{k-\ell} = 0\\
(v, \dt v)|_{t = 0} = (0, 0).
\end{cases}
\label{SNLW7a}
\end{equation}

We now state an almost sure global well-posedness result
of \eqref{SNLW9} with the random initial data  $(u_0^\o, u_1^\o)$ in \eqref{IV1}.

\begin{theorem}\label{THM:GWP2}
Let $k \in 2 \N + 1$.   Then, the renormalized SvNLW \eqref{SNLW9}
is almost surely globally well-posed 
with the random initial data 
 $(u_0^\o, u_1^\o)$ defined in \eqref{IV1}
in the  sense that the following statement holds true almost surely; 
there exists a unique global-in-time solution $v$ to \eqref{SNLW7a}
with the zero initial data,  
where $\{\,\wick{\Phi^\ell}\,\}_{\l = 1}^k$
denotes the stochastic convolution $\Phi$ defined in~\eqref{phi2}
and its Wick powers 
$\wick{\Phi^\ell}$, $\l = 2, \dots, k$,  
defined in Lemma \ref{LEM:Psi} below.

\end{theorem}

The proof of Theorem \ref{THM:GWP2} is based on 
Bourgain's invariant measure argument \cite{BO94, BO96}.
By viewing the  SvNLW dynamics \eqref{SNLW9} as the ``superposition'' 
of the (renormalized) NLW dynamics \eqref{NLW5}
and the Ornstein-Uhlenbeck dynamics (for $\dt u$):
\begin{align*}
\dt (\dt  u) = -  D \dt u 
+ \sqrt{2} D^\frac{1}{2}  \xi, 
\end{align*}

\noi
we expect the Gibbs measure
(for the standard  NLW \eqref{NLW5}), formally given by
\begin{align}
``d\rhoo(u,\dt u ) = Z^{-1}e^{-E( u, \dt u  )}dud(\dt u)", 
\label{Gibbs1}
\end{align}

\noi
to be invariant 
under the SvNLW dynamics \eqref{SNLW9}.

Let $\muu_1$ be  the induced probability measure under the map:
$\o \in \O \longmapsto (u_0^\o, u_1^\o)$,
where 
 $(u_0^\o, u_1^\o)$ is as  in \eqref{IV1}.
Then, we can write $\muu_1$ as $\muu_1 = \mu_1\otimes \mu_0$, 
where  $\mu_s$ denote
a Gaussian measure on periodic distributions,   formally defined by
\begin{align}
 d \mu_s 
   = Z_s^{-1} e^{-\frac 12 \| u\|_{{H}^{s}}^2} du
& =  Z_s^{-1} \prod_{n \in \Z^2} 
 e^{-\frac 12 \jb{n}^{2s} |\ft u(n)|^2}   
 d\ft u(n) .
\label{gauss0}
\end{align}

\noi
Note that $\mu_1$ corresponds to the massive Gaussian free field, 
while $\mu_0$ corresponds to the white noise.
Then, 
by renormalizing the potential part of the energy $ E(\vec u)$ in \eqref{E1}, 
we can indeed construct  the  Gibbs measure $\rhoo$ 
as a probability measure
such that $\rhoo$ and $\muu_1$
are mutually absolutely continuous.
By exploiting the formal invariance of
the Gibbs measure $\rhoo$
under \eqref{SNLW9}, 
Bourgain's invariant measure argument yields
almost sure global well-posedness of \eqref{SNLW9}
with respect to the Gibbs measure $\rhoo$; 
see Theorem \ref{THM:GWP3} below.
By invoking the mutual absolute continuity 
of $\rhoo$ and $\muu_1$, 
we then conclude Theorem \ref{THM:GWP2}.
  See
Subsection \ref{SUBSEC:GWP2a} for details.

As a corollary to Theorem \ref{THM:GWP2}
and the Cameron-Martin theorem \cite{CM}, we obtain the following almost sure global
well-posedness of \eqref{SNLW9}
with 
deterministic $\H^1$-initial data $(v_0, v_1)$ perturbed by 
the random functions $(u_0^\o, u_1^\o)$ in \eqref{IV1}.
Let us introduce some notations.
{\it Fix} $\vec v_0  = (v_0, v_1) \in \H^1(\T^2)$.
With 
 $\vec u_0^\o = (u_0^\o,u_1^\o)$, 
define the stochastic convolution 
$\Phi[\vec v_0 + \vec u_0^\o]$
with the shifted initial data
$\vec v_0 + \vec u_0^\o$:
\begin{align} 
\begin{split}
\Phi[\vec v_0 + \vec u_0^\o](t) 
&  = V(t)(\vec v_0 + \vec u_0^\o)
+   \sqrt{2} \int_0^t S(t-t') D^\frac{1}{2} dW(t')\\
&   
 = V(t)\vec v_0 + \Phi (t), 
\end{split}
\label{phi3}
\end{align}

\noi
where $\Phi$ is as in \eqref{phi2}.
Given $N \in \N$,  set $\Phi_N[\vec v_0 + \vec u_0^\o]
= \P_N \Phi[\vec v_0 + \vec u_0^\o]$.
In view of 
\begin{align*}
H_\l(x+y; \s )
&  = 
\sum_{j = 0}^\l
\begin{pmatrix}
\l \\ j
\end{pmatrix}
 x^{\l - j} H_j(y; \s)
\end{align*}

\noi
and \eqref{Wick3}, 
we define the Wick power $:\! (\Phi_N[\vec v_0 + \vec u_0^\o])^\l  \!:$ by 
\begin{align}
\begin{split}
:\! (\Phi_N[\vec v_0 + \vec u_0^\o])^\l ( t, x) \!:
 \, &  \deff  H_{\l} (\Phi_N[\vec v_0 + \vec u_0^\o]( t, x); \al_N)\\
& \, =  \sum_{j = 0}^\l
\begin{pmatrix}
\l \\ j
\end{pmatrix}
(V(t)\vec v_0)^j :\!\Phi_N^{\l-j} ( t, x) \!:.
\end{split}
\label{Wick4}
\end{align}

\noi
where $\al_N$ is as in \eqref{sig2}
and $:\!\Phi_N^{\l-j}  \!: \ = \ :\!(\Phi_N[\vec u_0^\o])^{\l-j}  \!:$
is as in \eqref{Wick3}.
Thanks to the $H^1$-regularity of $V(t)\vec v_0$
and the almost sure convergence of $:\!\Phi_N^{\l-j}  \!:$,  
 we see that  the Wick power $\wick{(\Phi_N[\vec v_0 + \vec u_0^\o])^\ell}$ converges to a limit $\wick{(\Phi[\vec v_0 + \vec u_0^\o])^\ell}$ in $C([0,T];W^{-\eps,\infty} (\T^2))$ for any $\eps > 0$ and $T > 0$, almost surely;
see Subsection~\ref{SUBSEC:GWP2b}.

Proceeding as in Subsection 
\ref{SUBSEC:1.2}
with the first order expansion 
$u_N = \Phi_N[\vec v_0 + \vec u_0^\o] + v_N$
and taking $N \to \infty$, 
we can reformulate  the  renormalized SvNLW \eqref{SNLW9}
with the shifted initial data
$(u, \dt u)|_{t = 0} = \vec v_0 + \vec u_0^\o$
as
\begin{equation}
\begin{cases}
\dt^2 v + (1-\Dl) v + D \dt v + \sum_{\ell = 0}^k \binom{k}{\ell} \wick{(\Phi
[\vec v_0 + \vec u_0^\o])^\ell} v^{k-\ell} = 0\\
(v, \dt v)|_{t = 0} = (0, 0).
\end{cases}
\label{SNLW7b}
\end{equation}

\begin{corollary}\label{COR:GWP3}

Let $k \in 2 \N + 1$ and fix $(v_0, v_1) \in \H^1(\T^2)$.
Then, the renormalized SvNLW~\eqref{SNLW9}
is almost surely globally well-posed 
with respect to the shifted initial data
 $(v_0, v_1) + (u_0^\o, u_1^\o)$,
 where
 $(u_0^\o, u_1^\o)$ is as in  \eqref{IV1}, 
in the sense that 
there exists almost surely a unique global-in-time solution $v$ to \eqref{SNLW7b}
with the zero initial data.

\end{corollary}

In 
Subsection \ref{SUBSEC:GWP2b}, 
we sketch the argument.
See
 \cite{OQ} for a further discussion
 on 
 probabilistic well-posedness and other aspects
 (such as  the large deviation principle)
 for 
 random initial data 
 of the form ``a smooth deterministic function + a rough random perturbation''.

 \medskip
 
 We conclude this introduction by stating several remarks.

\begin{remark}\rm 

(i) 
The pathwise global well-posedness result (Theorem \ref{THM:GWP1})
relies on a dispersive PDE argument.
As such, 
the coefficient $\sqrt{2}$ on  the noise $D^\frac 12 \xi$ in the equation~\eqref{SNLW9}
plays no role
and thus 
Theorem 
\ref{THM:GWP1}
also applies to the cubic SvNLW \eqref{SNLW9} (with $k = 3$)
with a general coefficient on the noise.
On the other hand, 
Theorem \ref{THM:GWP2} relies
on the invariant measure argument
and thus the coefficient on the  noise in  \eqref{SNLW9}
must be $\sqrt{2}$.\footnote{For general $0 < \mu < 1$, 
Theorem \ref{THM:GWP2} also holds for the following equation:
\begin{align*}
\dt^2 u + (1-\Dl) u + 2\mu D \dt u \, \, +  \wick{u^k}\, 
= 2\sqrt{\mu} D^\frac{1}{2}  \xi, 
\end{align*}

where the   coefficient on the  noise is $2\sqrt\mu$.\label{FT:mu}}


At this point, we do not know how to extend the pathwise global well-posedness
(Theorem~\ref{THM:GWP1}) to the (super-)quintic case.
Even with a smoother noise, 
one would need to use a trick 
introduced in 
\cite{OP1, Lat} to handle the higher homogeneity.
See  \cite{Liu}.
%
%
It may also be  of interest to investigate if
a parabolic PDE approach such as those in \cite{MW2, Tren}
can be applied to handle the (super-)quintic case.

\smallskip

\noi
(ii) As mentioned in Remark \ref{REM:Gibbs}, 
Theorems \ref{THM:LWP} and \ref{THM:GWP1}
apply to~\eqref{SNLW1} with $\al = \frac 12$
with essentially identical proofs.
When $\al < \frac 12$, the equation \eqref{SNLW1} is no longer singular
(namely, a solution is a function and there is no need 
for a renormalization).
See~\cite{Liu} for pathwise global well-posedness results 
for higher values of $k \in 2\N + 1$, 
when $ \al < \frac 12$.

\smallskip

\noi
(iii) 
When $k = 2$, the equation \eqref{SNLW9} is no longer defocusing.
Even in this case, however, the Gibbs measure can be constructed;
see \cite{BOpark, OST}
and thus an analogue of almost sure global well-posedness
(Theorem \ref{THM:GWP2})
holds  when $k = 2$.
In the non-defocusing case with $k \geq 3$, 
namely, either (i) even $k \geq 4$ or (ii) with the nonlinearity $-\wick{u^k}\,$, $k \in 2\N+1$, 
(i.e.~with the negative sign)
in \eqref{SNLW9}, 
it is known that the Gibbs measure is not constructible
\cite{BS, OST}
and hence Bourgain's invariant measure argument is not applicable in this case.


\smallskip

\noi
(iv)  For the physical reason, 
it is of interest to investigate the well-posedness issue of \eqref{SNLW9} on 
$\R^2$.  In this case, due to the unboundedness of the domain, 
the integrability becomes an issue.
In view of the pathwise global well-posedness
(Theorem \ref{THM:GWP1}), we expect that the dispersive techniques as in \cite{Tol}
may be applied to treat the cubic case.
As for the higher order nonlinearity, it may be possible to 
adapt the parabolic approach as in \cite{MW2}.
We plan to address this issue in a forthcoming work.

\smallskip

\noi
(v) As for the model \eqref{SNLW3} without renormalization, 
we expect 
a {\it triviality} result to hold.
Roughly speaking, 
  extreme oscillations 
  make 
solutions $u_N$ to~\eqref{SNLW5} with regularized noises
tend to a solution to the  linear stochastic viscous  wave equation~\eqref{SNLW4}
(or the trivial solution) as the regularization is removed.
Such a triviality result (in the absence of renormalization) is known 
for  stochastic NLW and stochastic nonlinear heat equations; see \cite{russo4, HRW, OOR, ORSW}.

\end{remark}

\begin{remark} \rm
From its derivation, 
the viscous wave equation is most relevant physically 
in two spatial dimensions;
see \cite{KC1, CKO}.
At the same time, it is of interest to study the equation in other spatial dimensions.

\smallskip

\noi
(i) 
Let us first consider the following equation on $\T^d$:
\begin{align}
\dt^2 u +(1- \Dl)  u  + D \dt u  + u^k  = D^\al \xi.
\label{Y1}
\end{align}

\noi
When $\al < \frac{3-d}{2}$, 
it follows from
the $\frac 32$-smoothing of the viscous wave operator
 that the stochastic convolution $\Psi$ defined by  
\begin{equation}
\Psi (t) =   \int_0^t S(t-t')  D^\al dW(t')
\label{psi5}
\end{equation}

\noi
is a function and hence there is no need for renormalization to study \eqref{Y1}.
See a recent preprint \cite{Liu}.

When $\al = \frac {3-d}{2}$, the stochastic convolution $\Psi$ in \eqref{psi5}
has regularity slightly below 0
and we need to apply the Wick renormalization to study the equation.
When $d = 1$, the local well-posedness for general $k \ge2$
and the pathwise global well-posedness in the cubic case ($k = 3$)
as in Theorems \ref{THM:LWP} and \ref{THM:GWP1}, respectively, 
hold true with the same proofs.
When $d = 3$ (corresponding to the 
space-time white noise forcing), 
%
%
%
%
%
%
in view of  the embedding $H^1(\T^3) \subset L^6(\T^3)$, 
a slight modification of the proofs
of 
Theorems~\ref{THM:LWP} and \ref{THM:GWP1}
shows that 
(a) for  $k = 2, 3$,  (the renormalized version of) SvNLW \eqref{Y1} (with $\al  = 0$)
on $\T^3$ is locally well-posed in $\H^s(\T^3)$, $ s\ge 1$, 
and (b) 
(the renormalized version of) the cubic  SvNLW \eqref{Y1} (with $k = 3$ and $\al = 0$)
on $\T^3$ is globally well-posed in $\H^s(\T^3)$, $ s\ge 1$.
Due to the more restrictive range of Sobolev's inequality on $\T^3$, 
however, 
the proof of Theorem \ref{THM:LWP} does not apply 
to SvNLW \eqref{Y1} (with $\al = 0$) on $\T^3$ 
for  $k \geq 4$.
In this case, one needs to make use of the (wave) Strichartz estimates
to prove local well-posedness.
In  higher dimensions, 
one also needs to use the Strichartz estimates (except for $d = 4$ and $k = 2$,
which can be handled by Sobolev's embedding).
We, however, do not pursue this issue in this paper.

When $\al >  \frac {3-d}{2}$, the stochastic convolution $\Psi$ in \eqref{psi5}
has even lower regularity, possibly requiring a further renormalization; see Part (ii) below.
For values of $\al$ close to $\frac {3-d}{2}$, the proof of local well-posedness
(Theorem \ref{THM:LWP}) is applicable but the value of $\al$ depends on the degree $k$
of the nonlinearity.
For higher values of $\al$, one needs to use a more sophisticated approach
such as the paracontrolled approach  \cite{GKO2, OOTol, OOTol2, Bring2} together with the Strichartz estimates.
As for the global well-posedness, the proof of Theorem \ref{THM:GWP1}
crucially exploits the logarithmic divergence
of the stochastic convolution and thus it is not applicable to the case
$\al >  \frac {3-d}{2}$.

\smallskip

\noi
(ii) Next, we consider the following equation on $\T^d$:
\begin{align}
\dt^2 u +(1- \Dl)  u  + D \dt u  + u^k  = \sqrt 2 D^\frac{1}{2} \xi
\label{Y2}
\end{align}

\noi
with $k \in 2 \N + 1$, where one may prove almost sure global well-posedness
via Bourgain's invariant measure argument.
When $ d= 1$, the stochastic convolution $\Phi$ defined in \eqref{phi2}
has spatial regularity $\frac 12 - \eps$
and thus there is no need to introduce renormalization.
In this case, local well-posedness easily follows from Sobolev's inequality (without
the first order expansion),  and Bourgain's invariant measure argument
\cite{BO94} yields the one-dimensional analogue of Theorem \ref{THM:GWP2}.

When $d = 3$, we first recall that the Gibbs measure $\rhoo$ in \eqref{Gibbs1}
and the Gaussian measure $\muu_1 = \mu_1 \otimes \mu_0$ (= the distribution
of the random initial data $(u_0^\o, u_1^\o)$ in \eqref{IV1})
are mutually singular; see \cite{BG1}.
Hence, we need to study the Gibbsian initial data in this case.
Noting that the Gibbs measure $\rhoo$ corresponds to 
the $\Phi^4_3$-measure (on $u$) $\otimes$ the spatial white noise (on $\dt u$), 
we see that the Wick renormalization
is not sufficient and that we need to introduce another renormalization to remove
the logarithmic divergence; see \cite{Hairer, MWX} 
in the case of the parabolic $\Phi^4_3$-model.
The well-posedness theory in this case
certainly requires 
 a more sophisticated approach
such as the paracontrolled approach  \cite{GKO2},
which is beyond the scope of this paper.

\end{remark}

\section{Preliminary lemmas}

\subsection{Tools from stochastic analysis}
\label{SUBSEC:sto}

For readers' convenience, we first recall the Hermite polynomials $H_k(x; \s)$, 
 defined through the following generating function:
\begin{equation*}
F(t, x; \s) = e^{tx - \frac{1}{2}\s t^2} = \sum_{k = 0}^\infty \frac{t^k}{k!} H_k(x;\s), 
 \end{equation*}
	
\noi
which is used in constructing the renormalized powers
\eqref{Wick1}, \eqref{Wick3}, and \eqref{Wick4}.

The following lemma
 establishes the regularity 
of the stochastic convolution $Z$
and its Wick powers $\wick{Z^k}\, $, 
where $Z= \Psi$ in \eqref{psi1} or $Z = \Phi$ in \eqref{phi2}.

\begin{lemma}\label{LEM:Psi}
Let $Z = \Psi$ or  $\Phi$.
Given  $k \in \N$
and $N \in \N$, let $:\! Z_N^k \!:
\, = \, :\! (\P_N Z)^k \!:$
denote the truncated Wick power defined
in \eqref{Wick1} or \eqref{Wick3}, respectively.
Then, 
given any  $T,\eps>0$ and finite $p \geq 1$, 
 $\{ \, :\! Z_N^k \!: \, \}_{N\in \N}$ is a Cauchy sequence in $L^p(\O;C([0,T];W^{-\eps,\infty}(\T^2)))$,
 converging to some limit $:\!Z^k\!:$ in $L^p(\O;C([0,T];W^{-\eps,\infty}(\T^2)))$.
Moreover,  $:\! Z_N^k \!:$  converges almost surely to the same  limit in $C([0,T];W^{-\eps,\infty}(\T^2))$.
Furthermore, we have the following tails estimates.

\smallskip

\noi
\textup{(i)}
Given any finite $q\geq 1$, we   have 
\begin{align}
P\Big( \|:\! Z^k \!:\|_{L^q_T W^{-\eps, \infty}_x} > \ld\Big) 
\leq C\exp\bigg(-c \frac{\ld^{\frac{2}{k}}}{T^{ \frac{2}{qk}}}\bigg)
\label{P0}
\end{align}

\noi
for any $T \geq 1$ and $\ld > 0$.

\smallskip

\noi
\textup{(ii)}
When $q = \infty$, we  have 
\begin{align}
P\Big( \|:\! Z^k \!:\|_{L^\infty ([j, j+1]; W^{-\eps, \infty}_x)}> \ld\Big) 
\leq C\exp\bigg(-c \frac{\ld^{\frac{2}{k}}}{j+1}\bigg)
\label{P0z}
\end{align}

\noi
for any $j \in \Z_{\ge 0}$ and $\ld > 0$.

\smallskip

\noi
\textup{(iii)}
When $q = \infty$ and $k= 1$, we  have 
\begin{align}
P\Big( \| Z \|_{L^\infty ([0, T]; W^{-\eps, \infty}_x)}> \ld\Big) 
\leq CT \exp\bigg(-c \frac{\ld^2\eps}{T}\bigg)
\label{P0z2}
\end{align}

\noi
for any $T\geq 1$ and $\ld > 0$.

\end{lemma}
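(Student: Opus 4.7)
My plan is to reduce every assertion to an $L^2(\Omega)$ covariance computation and then upgrade via Nelson's hypercontractivity. The cases $Z=\Psi$ and $Z=\Phi$ are essentially identical (the time-independence of $\alpha_N$ in \eqref{sig2} if anything simplifies the $\Phi$ case), so I focus on $Z=\Psi$.

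\textbf{Step 1 (covariance of Wick powers).} Since $\wick{\Psi_N^k(t,x)}$ lies in the $k$-th homogeneous Wiener chaos, the standard Hermite identity yields
\[
\E\bigl[\wick{\Psi_N^k(t,x)}\,\wick{\Psi_M^k(t',x')}\bigr]=k!\,\bigl(\E[\Psi_N(t,x)\,\Psi_M(t',x')]\bigr)^k.
\]
Writing the covariance in Fourier via \eqref{psi1}--\eqref{S1} and proceeding exactly as in \eqref{sig1}, one sees $\widehat{\Gamma_N}(t,t;n)\sim \jb{n}^{-2}\ind_{|n|\le N}$ plus an $e^{-t|n|}$ remainder, which encodes the $\log N$ divergence of $\sigma_N$.

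\textbf{Step 2 (spatial moments).} To bound $\E\|\wick{\Psi_N^k}(t)\|_{W^{-\eps,\infty}_x}^p$, I use the Sobolev embedding $W^{-\eps/2,r}(\T^2)\hookrightarrow W^{-\eps,\infty}(\T^2)$ for $r>4/\eps$ to reduce to $\int_{\T^2}\E|\jb{\nabla}^{-\eps/2}\wick{\Psi_N^k}(t,x)|^r\,dx$. Nelson's hypercontractivity controls this by $C r^{rk/2}(\E|\jb{\nabla}^{-\eps/2}\wick{\Psi_N^k}(t,x)|^2)^{r/2}$, and the $L^2(\Omega)$ term expands using Step~1 into
\[
\sum_{n_1,\ldots,n_k\in\Z^2}\jb{n_1+\cdots+n_k}^{-\eps}\prod_{j=1}^k \jb{n_j}^{-2},
\]
(times bounded time-factors), which is finite uniformly in $N$ by a standard summation lemma. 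Replacing $\wick{\Psi_N^k}$ by $\wick{\Psi_N^k}-\wick{\Psi_M^k}$ for $M\le N$ gives the same estimate with a polynomial gain $M^{-\theta}$, $\theta=\theta(\eps)>0$.

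\textbf{Step 3 (temporal continuity and convergence).} An analogous chaos computation, using the Lipschitz-in-$t$ character of the kernel $S(t)D^{1/2}$ in slightly rougher spaces, yields $\E\|\wick{\Psi_N^k}(t)-\wick{\Psi_N^k}(t')\|_{W^{-\eps,\infty}_x}^p \lesssim C_p|t-t'|^{\beta p}$ for some $\beta>0$. Kolmogorov's continuity criterion then promotes the pointwise-in-$t$ bounds to $L^p(\Omega;C([0,T];W^{-\eps,\infty}))$ estimates, and the same argument applied to the difference $\wick{\Psi_N^k}-\wick{\Psi_M^k}$ establishes the Cauchy property with the $M^{-\theta}$ gain from Step~2. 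Almost sure convergence on the full sequence follows by Borel--Cantelli along a geometric subsequence $N_j=2^j$ combined with orthogonality of the Wiener chaos projections.

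\textbf{Step 4 (tail bounds).} Steps~2--3 together with hypercontractivity give $\|\wick{Z^k}(t)\|_{L^p(\Omega;W^{-\eps,\infty})}\le Cp^{k/2}$. For \eqref{P0}, Minkowski (valid for $p\ge q$) produces $\bigl\|\|\wick{Z^k}\|_{L^q_TW^{-\eps,\infty}}\bigr\|_{L^p(\Omega)}\le CT^{1/q}p^{k/2}$, and Chebyshev combined with the optimal choice $p\sim (\lambda/T^{1/q})^{2/k}$ yields \eqref{P0}. For \eqref{P0z} ($q=\infty$), Kolmogorov's theorem on the unit interval $[j,j+1]$ with the moment bound of Step~3 contributes an extra $(j+1)^{k/2}$ factor, giving the $(j+1)^{-1}$ in the exponent after optimisation. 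Finally, \eqref{P0z2} is the Gaussian case $k=1$: the concentration exponent $\exp(-c\lambda^2/\sigma^2)$ is sharp, and a Fourier truncation at level $N$ balances the number $\sim T N^2$ of chaining points against the high-frequency tail $\lesssim N^{-\eps}$; optimising in $N$ produces the stated factor $\lambda^2\eps/T$.

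The main obstacle is the bookkeeping in Step~4 to extract the exact $T$-, $j$-, and $\eps$-dependence of the tails, particularly the $\eps$-factor in \eqref{P0z2}, which forces the chaining truncation level to be tuned to $\lambda$, $T$, and $\eps$ simultaneously rather than fixed absolutely.
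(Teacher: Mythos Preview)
Your Steps 1--4 and the arguments for the convergence statement and tail bounds (i) and (ii) coincide with the paper's proof: the paper also reduces to the second-moment bound
\[
\E\big[|\jb{\nabla}^{-\eps}\wick{Z^k}(t,x)|^2\big] \lesssim \sum_{n_1,\ldots,n_k\in\Z^2}\frac{1}{\jb{n_1+\cdots+n_k}^{2\eps}\jb{n_1}^2\cdots\jb{n_k}^2}<\infty,
\]
upgrades via hypercontractivity (the Wiener chaos estimate) and the Sobolev embedding $W^{-\eps/2,r}\hookrightarrow W^{-\eps,\infty}$ for $r>4/\eps$, and optimises $p$ in Chebyshev. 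For (ii) the paper invokes the Garsia--Rodemich--Rumsey inequality rather than Kolmogorov's criterion, but this is cosmetic: both yield exponential tail bounds for the H\"older seminorm from the same increment moments.

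The one real point of divergence is part (iii). The paper does \emph{not} use a Fourier-truncation/chaining balance; instead it simply makes the $\eps$-dependence in Step~2 explicit by observing
\[
\sum_{n\in\Z^2}\jb{n}^{-2-\eps}\sim\eps^{-1},
\]
which gives $\big\|\,\|Z(t)\|_{W^{-\eps,\infty}_x}\big\|_{L^p(\Omega)}\lesssim p^{1/2}\eps^{-1/2}$ (and likewise for time increments via the mean value theorem). The argument of (ii) on each $[j,j+1]$ followed by a union bound over $0\le j\le T$ then yields \eqref{P0z2} directly. This is shorter and more transparent than your proposed chaining optimisation: the $\eps$-factor in the exponent falls out immediately from the pointwise variance $\sigma^2\sim\eps^{-1}$ and Gaussian concentration $\exp(-c\lambda^2/\sigma^2)=\exp(-c\lambda^2\eps)$, with no balancing in $N$ required. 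Your sketch, as written, does not make clear how ``balancing $\sim TN^2$ chaining points against a high-frequency tail $\lesssim N^{-\eps}$'' actually produces the combination $\lambda^2\eps/T$ rather than, say, $\lambda^2\eps$ or $\lambda^2/\log N$; the paper's route sidesteps this bookkeeping entirely.
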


Part (iii) of this lemma in particular plays an important role
in the proof of Theorem~\ref{THM:GWP1}.
See Section \ref{SEC:GWP1}.

\begin{proof}
In view of \eqref{sig1} and \eqref{sig2}, 
the proof of this lemma is essentially identical 
to 
that of Lemma 2.3 in \cite{GKOT} for the stochastic (damped) wave equation.
Hence, we will be very brief here.
As for the convergence part of the statement, 
see \cite[Proposition~2.1]{GKO} 
and  \cite[Lemma~3.1]{GKO2}.

\smallskip

\noi
(i) 
As for the exponential tail estimate \eqref{P0}, 
by repeating the argument
in the proof of \cite[Proposition~2.1]{GKO}, 
we have
\begin{align}
\E\big[| \jb{\nb}^{-\eps} :\! Z^{k} (t, x) \!:  |^2 \big] 
& \les \sum_{n_1, \dots, n_{k} \in \mathbb{Z}^2} 
\frac{1}{\langle n_1
\rangle^2 \cdots \langle n_{k} \rangle^2 \langle n_1 + \cdots + n_{k}
\rangle^{2\eps}} \le C_\eps 
\label{P0z1}
\end{align}

\noi
for any $\eps > 0$, uniformly in 
$x \in \T^2$ and $t \geq 0$.
Then, given finite $q \geq 1$, Sobolev's inequality (with some $r > 4\eps^{-1}$),  Minkowski's integral inequality,  and the Wiener chaos estimate 
(\cite[Lemmas 2.3 and 2.4]{GKO}) yield
\begin{align}
 \Big\| \| :\! Z^{k}  \!:\|_{L^q_T W^{-\eps, \infty}_x}\Big\|_{L^p(\O)}
\les 
  \Big\| \| :\! Z^{k}  \!:\|_{L^q_T W^{-\frac \eps 2, r}_x}\Big\|_{L^p(\O)}
\les p ^\frac{k}{2} T^{ \frac{1}{q}}
\label{P0a}
\end{align}

\noi
for any  $p \geq \max (q, r) $.
Then, the bound  \eqref{P0}
follows from \eqref{P0a} and Chebyshev's inequality
(as in \cite[Lemma 3]{BOP1}).\footnote{Lemma 2.2 in the arXiv version.}

\smallskip

\noi
(ii) 
As for the second bound \eqref{P0z}, we first write 
\begin{align}
\begin{split}
P\Big( \|:\! Z^k \!:\|_{L^\infty ([j, j+1]; W^{-\eps, \infty}_x)}
& > \ld\Big) 
\leq
 P\Big( \|:\! Z^k(j) \!:\|_{ W^{-\eps, \infty}_x}> \tfrac{\ld}{2}\Big) \\
+ 
& P\Big( \sup_{t \in [j, j+1]}\|:\! Z^k(t) \!: - :\! Z^k(j) \!:\|_{ W^{-\eps, \infty}_x}> \tfrac \ld2\Big)
\end{split}
\label{P0b}
\end{align}

\noi
for given $j \in \Z_{\ge 0}$ and $\ld > 0$. 
Using \eqref{P0z1}, we can repeat the argument in Part (i)
to bound 
the first term on the right-hand side of \eqref{P0b}.
As for the second term on the right-hand side
of~\eqref{P0b}, 
we first recall from  the proof of \cite[Proposition~2.1]{GKO}
that 
\begin{align*}
\Big\| |h|^{-\rho} \|\dl_h (:\! Z^{k} (t) \!:) \|_{W^{-\eps, \infty}_x} \Big\|_{L^p(\O)}
\les p^\frac{k}{2} (j+1)^\frac{k}{2}
\end{align*}

\noi
for any sufficiently large $p \gg1 $, $t \in [j, j+1]$, 
and $|h| \leq 1$, where $\dl_h f(t) = f(t+h) - f(t)$ and $0 < \rho < \eps$.
Then, the desired bound \eqref{P0z} follows
from Chebyshev's inequality and 
the Garsia-Rodemich-Rumsey inequality
(\cite[Theorem A.1]{FV} and \cite[Lemma 2.2]{GKOT}), 
which provides an exponential tail bound
for a H\"older norm (in time).
See the proof of Lemma~2.3 in \cite{GKOT} for further details.

\smallskip

\noi
(iii) 
The third bound \eqref{P0z2} follows in a similar manner once 
we make $\eps$-dependence more explicit.
By Sobolev's inequality,   Minkowski's integral inequality,  and the Wiener chaos estimate (\cite[Lemmas 2.3 and 2.4]{GKO}), we have 
\begin{align*}
\begin{split}
\big\| \| Z (j) \|_{W_x^{-\eps,\infty}} \big\|_{L^p (\O)} 
& \les \big\| \| Z (j) \|_{W_x^{-\frac \eps2,r}} \big\|_{L^p (\O)} 
 \les p^{\frac 12} \Big\| \big\| \jb{\nb}^{-\frac{\eps}{2}} Z (j) \big\|_{L^2 (\O)} \Big\|_{L_x^r} \\
& \les p^{\frac 12} \bigg\| \Big( \sum_{n \in \Z^2} \frac{1}{\jb{n}^{2+\eps}} \Big)^{\frac 12} \bigg\|_{L_x^r} \\
& \sim p^{\frac 12} \eps^{-\frac 12}
\end{split}
\end{align*}

\noi
for any $p \geq r > 4\eps^{-1}$.
Similarly, by the mean value theorem (with \eqref{psi1} or \eqref{phi2}), we have 
\begin{align*}
\Big\| |h|^{-\rho} \|\dl_h Z (t) \|_{W^{-\eps, \infty}_x} \Big\|_{L^p(\O)}
 \les p^{\frac 12} |h|^{\frac{\eps}{4}-\rho}\bigg( \sum_{n \in \Z^2} \frac{1}{\jb{n}^{2+\eps}} 
 \bigg)^{\frac 12} 
\les p^\frac{1}{2}\eps^{-\frac{1}{2}}
\end{align*}

\noi
for any sufficiently large $p \gg 1$, $t,t+h \in [0,T]$, 
and $|h| \leq 1$, provided that  $0 < \rho < \frac \eps4$.
Then, the rest follows from proceeding as in Part (ii)
and summing over the interval $[j, j+1]$.
\end{proof}

\subsection{Tools from deterministic analysis}

We first recall the product estimates.  

\begin{lemma}\label{LEM:bilin}
 Let $0\le s \le 1$.

\smallskip

\noi
\textup{(i)} Suppose that 
 $1<p_j,q_j,r < \infty$, $\frac1{p_j} + \frac1{q_j}= \frac1r$, $j = 1, 2$. 
 Then,  we have 
\begin{equation*}  
\| \jb{\nb}^s (fg) \|_{L^r(\T^d)} 
\les \Big( \| f \|_{L^{p_1}(\T^d)} 
\| \jb{\nb}^s g \|_{L^{q_1}(\T^d)} + \| \jb{\nb}^s f \|_{L^{p_2}(\T^d)} 
\|  g \|_{L^{q_2}(\T^d)}\Big).
\end{equation*}

\smallskip

\noi
\textup{(ii)} 
Suppose that  
 $1<p,q,r < \infty$ satisfy 
$\frac1p+\frac1q\leq \frac1r + \frac{s}d $.
Then, we have 
\begin{align*}
\big\| \jb{\nb}^{-s} (fg) \big\|_{L^r(\T^d)} \les \big\| \jb{\nb}^{-s} f \big\|_{L^p(\T^d) } 
\big\| \jb{\nb}^s g \big\|_{L^q(\T^d)}.  
\end{align*}

\end{lemma}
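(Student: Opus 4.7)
Both estimates are standard consequences of Littlewood--Paley and paraproduct theory, and I will sketch the approach rather than reproduce the full argument; complete proofs may be found in, e.g., Taylor's \emph{Tools for PDE} or Grafakos's \emph{Modern Fourier Analysis}. For part (i), the fractional Leibniz (Kato--Ponce) rule on $\T^d$, I would perform a Littlewood--Paley decomposition and split $fg$ into three paraproducts: the high-low piece $\sum_{j \gg k} P_j f \cdot P_k g$, its symmetric low-high counterpart, and the resonant piece $\sum_{j \sim k} P_j f \cdot P_k g$. When $\jb{\nb}^s$ hits the high-low piece, the derivative effectively lands on the high-frequency factor $f$, yielding a contribution of the form $\|\jb{\nb}^s f\|_{L^{p_2}} \|g\|_{L^{q_2}}$; symmetrically for the low-high piece. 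For the resonant piece, Bernstein's inequality allows one to place $\jb{\nb}^s$ on either factor and reassemble via the Littlewood--Paley square function characterization of $L^r$ (or equivalently the Coifman--Meyer bilinear multiplier theorem). The periodic setting presents no additional difficulty: one either works with periodic Littlewood--Paley blocks directly or transfers from the corresponding $\R^d$ result.

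Part (ii) then follows from part (i) by duality. With $\tfrac{1}{r} + \tfrac{1}{r'} = 1$ and $\psi = \jb{\nb}^{-s} \phi$ for $\phi \in L^{r'}$, the self-adjointness of $\jb{\nb}^{-s}$ gives
\[
\int_{\T^d} \jb{\nb}^{-s}(fg)\, \phi \, dx
= \int_{\T^d} f g \, \psi \, dx
= \int_{\T^d} \jb{\nb}^{-s} f \cdot \jb{\nb}^s (g \psi) \, dx.
\]
H\"older's inequality with exponents $(p, p')$ bounds this by $\|\jb{\nb}^{-s} f\|_{L^p} \|\jb{\nb}^s(g\psi)\|_{L^{p'}}$, and applying part (i) to distribute the derivative produces a sum of terms $\|\jb{\nb}^s g\|_{L^q} \|\psi\|_{L^{q_1}}$ and $\|g\|_{L^{q_2}} \|\jb{\nb}^s \psi\|_{L^{r'}}$, where the H\"older balance forces $\tfrac{1}{q_1} = 1 - \tfrac{1}{p} - \tfrac{1}{q}$ and $\tfrac{1}{q_2} = \tfrac{1}{r} - \tfrac{1}{p}$. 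Since $\|\psi\|_{W^{s,r'}} \les \|\phi\|_{L^{r'}}$ and $\jb{\nb}^s \psi = \phi$, Sobolev embeddings $W^{s,r'} \hookrightarrow L^{q_1}$ and $W^{s,q} \hookrightarrow L^{q_2}$ both reduce to precisely the hypothesis $\tfrac{1}{p} + \tfrac{1}{q} \le \tfrac{1}{r} + \tfrac{s}{d}$; taking the supremum over $\phi$ with $\|\phi\|_{L^{r'}} \le 1$ concludes the argument.

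The main obstacle is bookkeeping of the intermediate exponents: one must check that $q_1, q_2$ either lie in $(1, \infty)$, so that ordinary Sobolev embedding applies, or take the endpoint value $\infty$, which is handled via the sharp $W^{s,r} \hookrightarrow L^\infty$ embedding when $s > d/r$. Under the strict-inequality assumptions $1 < p, q, r < \infty$ this case analysis is routine and does not affect the stated estimates.
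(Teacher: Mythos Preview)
Your sketch is essentially correct and follows the standard route. The paper does not actually give a proof of this lemma: it simply refers to \cite{GKO} for both parts, and the argument there is precisely the Littlewood--Paley/paraproduct proof of Kato--Ponce for (i) together with the duality reduction you outline for (ii).

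The one place where your write-up diverges from the paper is in how the inequality $\tfrac1p+\tfrac1q\le\tfrac1r+\tfrac sd$ in (ii) is handled. The paper observes that \cite{GKO} proves (ii) only in the equality case $\tfrac1p+\tfrac1q=\tfrac1r+\tfrac sd$, and then passes to the general inequality by the trivial inclusion $L^{r_1}(\T^d)\hookrightarrow L^{r_2}(\T^d)$ for $r_1\ge r_2$. You instead run the duality argument directly under the inequality, which forces you into the exponent bookkeeping you flag at the end: for instance your $q_2$ satisfies $\tfrac1{q_2}=\tfrac1r-\tfrac1p$, which need not be positive, and $\tfrac1{q_1}=1-\tfrac1p-\tfrac1q$ need not be positive either, so one has to treat these $L^\infty$ endpoints separately (and part (i) as stated excludes $q_j=\infty$). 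In the equality case these exponents are exactly the critical Sobolev exponents and the issue does not arise, so the paper's two-step approach---prove the scaling-critical case by duality, then relax via the compactness of $\T^d$---is a bit cleaner than chasing the endpoint cases directly.
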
 

See \cite{GKO} for  the proof.
Note that
while  Lemma \ref{LEM:bilin} (ii) 
was shown only for 
$\frac1p+\frac1q= \frac1r + \frac{s}d $
in \cite{GKO}, 
the general case
$\frac1p+\frac1q\leq \frac1r + \frac{s}d $
follows from the inclusion $L^{r_1}(\T^d)\subset L^{r_2}(\T^d)$
for $r_1 \geq r_2$.

\smallskip

Next, we state  a Schauder-type estimate for the Poisson kernel 
\begin{align*}
P(t) = e^{-\frac{D}{2}t}.
\end{align*}

\begin{lemma}\label{LEM:Sch}
Let $1 \leq p \leq q \leq \infty$ and $\al \geq 0$. Then, we have
\begin{equation}
\| D^\al P(t) f \|_{L^q(\T^d)} \les t^{-\al-d(\frac{1}{p}-\frac{1}{q})} \| f \|_{L^p(\T^d)}
\label{Sch1}
\end{equation}

\noi
for any $0 < t \leq 1$. 

\end{lemma}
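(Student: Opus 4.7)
The plan is to reduce Lemma \ref{LEM:Sch} to a standard Schauder estimate for the heat semigroup on $\T^d$ via Bochner's subordination formula. Since $D = \sqrt{-\Dl}$, the Poisson semigroup admits the representation
\[
P(t) = e^{-tD/2} = \int_0^\infty k(t, s)\, e^{s\Dl}\, ds,
\qquad
k(t, s) = \frac{t}{4\sqrt{\pi}\, s^{3/2}}\, e^{-t^2/(16 s)},
\]
obtained by subordinating the Poisson semigroup to the heat semigroup against a positive measure on $(0, \infty)$. First I would apply $D^\al$ under the integral, and then invoke Minkowski's inequality together with the standard heat-semigroup Schauder bound
\[
\|D^\al e^{s\Dl} g\|_{L^q(\T^d)} \les s^{-\al/2 - \frac{d}{2}(\frac{1}{p} - \frac{1}{q})}\, \|g\|_{L^p(\T^d)} \qquad (s>0),
\]
to obtain
\[
\|D^\al P(t) f\|_{L^q(\T^d)} \les \bigg( \int_0^\infty k(t, s)\, s^{-a}\, ds \bigg) \|f\|_{L^p(\T^d)},
\qquad
a = \frac{d}{2}\Big(\frac{1}{p} - \frac{1}{q}\Big) + \frac{\al}{2}.
\]

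It then remains to evaluate the $s$-integral. Substituting $v = t^2/(16 s)$ converts it into a Gamma integral and yields
\[
\int_0^\infty k(t, s)\, s^{-a}\, ds = \frac{16^{a+\frac{1}{2}}\, \Gamma(a + \tfrac{1}{2})}{4\sqrt{\pi}}\, t^{-2a} = C_{a}\, t^{-\al - d(\frac{1}{p} - \frac{1}{q})},
\]
which is finite as long as $a > -\tfrac{1}{2}$; this is automatic from $\al \geq 0$ and $p \leq q$. The integrand decays super-exponentially as $s \to 0^+$ and algebraically as $s \to \infty$, so no splitting of the integration range is needed. Combining these two steps gives the claim for all $0 < t \leq 1$ (in fact for all $t > 0$).

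The sole technical input is the heat-semigroup Schauder estimate on $\T^d$, which is standard: one realizes $e^{s\Dl}$ as convolution with the periodized Gaussian kernel, applies Young's inequality together with its scaling $\|G_s\|_{L^\sigma(\T^d)} \les s^{-\frac{d}{2}(1-1/\sigma)}$ for $0 < s \leq 1$, and absorbs the factor $D^\al$ using the uniform bound $|n|^\al e^{-s|n|^2} \les s^{-\al/2}$ on Fourier modes; for $s > 1$ the operator $D^\al e^{s\Dl}$ decays exponentially, so the bound $\les s^{-a}$ remains valid throughout. The main obstacle one would face in any direct Fourier-kernel approach---and that the subordination formula cleanly sidesteps---is the $L^1(\R^d)$-integrability of $\F^{-1}[|\xi|^\al e^{-|\xi|/2}]$ for non-integer $\al > 0$, whose analysis near $\xi = 0$ would require careful bookkeeping of fractional derivatives of a symbol that is only H\"older regular at the origin.
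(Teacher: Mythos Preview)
Your subordination argument is correct and provides a genuinely different route from the paper. The paper works directly with the Poisson kernel: it uses the explicit form $K_t(x) = c_d\, t\,(t^2+|x|^2)^{-(d+1)/2}$ on $\R^d$ together with the scaling identity $D^\al K_t(x) = t^{-d-\al}(D^\al K_1)(t^{-1}x)$, applies Young's inequality to obtain \eqref{Sch1} on $\R^d$, and then transfers to $\T^d$ (for $0<t\le 1$) via the Poisson summation formula, bounding the periodized kernel by a weighted $L^r(\R^d)$-norm of $D^\al K_1$. The $L^r$-integrability issue you flag for non-integer $\al$ is handled there by interpolating from even integers, where $D^\al K_1$ is explicit and decays like $|x|^{-d-1-\al}$.

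Your approach trades that kernel analysis for the analogous heat-kernel input. Two comments. First, your one-line justification of the heat Schauder estimate is incomplete: the pointwise bound $|n|^\al e^{-s|n|^2}\les s^{-\al/2}$ only yields $L^2\to L^2$; for general $1\le p\le q\le\infty$ (including the endpoints) one still needs a kernel argument---compute $\|D^\al G_s\|_{L^r(\R^d)}$ by scaling and then periodize---which is essentially the same machinery the paper deploys for the Poisson kernel. So the net technical savings is modest. Second, and in your favor, once the heat estimate is granted the subordination computation is clean, gives the bound for all $t>0$ (not just $0<t\le 1$), and extends verbatim to any semigroup that is a Bernstein function of $-\Dl$. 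The paper's direct proof is more self-contained but tied to the explicit Poisson kernel.
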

\begin{proof}
We first prove \eqref{Sch1} on $\R^d$ for any $t > 0$. Let $K_t(x)$ denote the kernel for $P(t)$ given by 
$\F_{\R^d} (K_t)(\xi) = e^{-\frac{|\xi|}{2}t}$, 
where $\F_{\R^d}$ denotes the Fourier transform on $\R^d$.
Recall that 
\begin{align}
 K_t(x) =  c_d \frac{  t}{(t^2+|x|^2)^{\frac{d+1}2}}. 
 \label{K0a}
\end{align}

\noi
Noting
$D^\al P(t) f = (D^\al K_t) *f$, we need to study the scaling property of $D^\al K_t$.
On the Fourier side, we have
\[\F_{\R^d} (D^\al K_t)(\xi) = |\xi|^{\al} e^{-\frac{|\xi|}{2}t}
= t^{-\al}  (t |\xi|)^{\al} e^{-\frac{|\xi|t}{2}}
= t^{-\al} \F_{\R^d}(D^\al K_1)(t\xi).
\]

\noi
Namely, we have 
\begin{align}
D^\al K_t(x)  = t^{-d-\al} (D^\al K_1)(t^{-1}x).
\label{K0}
\end{align}

\noi
For $1 \leq r \leq \infty$ with $\frac{1}{r} = \frac{1}{q} - \frac{1}{p} + 1$, 
from \eqref{K0} and \eqref{K0a}, we have\footnote{One may first
use \eqref{K0a} to  show \eqref{K1} for even $\al \geq 0$
and then interpolate the result to  deduce \eqref{K1} for general $\al \geq 0$.}
\begin{align}
\| D^\al K_t\|_{L^r (\R^d)} =t^{-\al -d (1 - \frac{1}{r})} 
\|D^\al  K_1\|_{L^r (\R^d)}
= C_{r, \al} t^{-\al -d (\frac{1}{p} - \frac{1}{q})} .
\label{K1}
\end{align}

\noi
Then, \eqref{Sch1} follows from Young's inequality and \eqref{K1}.

Next, we prove \eqref{Sch1} on $\T^d$.
 Let $R_t(x)$ denote the kernel for $P(t)$ on $\T^d$ given by $\ft R_t (n) = e^{-\frac{|n|}{2}t}
=  \F_{\R^d}(K_t)(n) $. 
Then, given any
 $1 \leq r \leq \infty$, from 
the 
Poisson summation formula (with \eqref{K0a}) and H\"older's inequality,
we have 
\begin{align}
\begin{split}
\| D^\al R_t \|_{L^r(\T^d)} 
&= \bigg\| \sum_{n \in \Z^d}  \F_{\R^d} (D^\al K_t) (n) e^{in \cdot x} \bigg\|_{L^r(\T^d)} 
= \bigg\| \sum_{n \in \Z^d} D^\al  K_t (x+n) \bigg\|_{L^r(\T^d)} \\
&\leq \bigg\| \bigg( \sum_{n \in \Z^d} \jb{n}^{-\beta r'} \bigg)^{\frac1{r'}} 
\big\| \jb{n}^\beta D^\al K_t(x+n) \big\|_{\ell_n^r} \bigg\|_{L_x^r(\T^d)} \\
&\les \big\| \jb{x}^\beta D^\al K_t(x) \big\|_{L^r(\R^d)} \\
&\les \big\| D^\al K_t(x) \big\|_{L^r(\R^d)} + \big\| |\tfrac{x}{t}|^\beta D^\al K_t(x) \big\|_{L^r(\R^d)},
\end{split}
\label{K2}
\end{align}

\noi
provided that $\beta > d(1-\tfrac{1}{r})$ and $0 < t \leq 1$. 
From \eqref{K0}, we have 
\begin{align}
 \big\| |\tfrac{x}{t}|^\beta D^\al K_t(x) \big\|_{L^r(\R^d)} 
 = t^{-\al -d (1 - \frac{1}{r})} \big\| |x|^\beta D^\al  K_1(x) \big\|_{L^r(\R^d)} = C_{r, \al}' t^{-\al -d (1 - \frac{1}{r})}
\label{K3}
 \end{align}
 
\noi
for some finite $C_{r, \al}'  > 0$,  
provided that $\beta < d(1-\tfrac{1}{r}) + 1$. 
Hence, the desired bound \eqref{Sch1} follows
from Young's inequality and \eqref{K2} with \eqref{K1} and \eqref{K3}.
\end{proof}

\section{Local well-posedness of the stochastic viscous NLW}

In this section, we present the proof of Theorem \ref{THM:LWP}.
For this purpose, we consider
the following deterministic vNLW:
\begin{align}
\begin{cases}
\dt^2 v + (1-\Dl)v + D \dt v + \sum_{\ell = 0}^k \binom{k}{\ell} \, \Xi_\ell\,  v^{k-\ell}
= 0\\
(v,\dt v)|_{t = 0}=(v_0,v_1)
\end{cases}
\label{rNLW2}
\end{align}

\noi
for given initial data $(v_0,v_1) \in \H^s(\T^2)$, $s \geq 1$,   and 
deterministic source terms  $(\Xi_0,\dots,\Xi_k)$ with the understanding that $\Xi_0 \equiv 1$.
Define $\mathcal{X}^s (\T^2)$ by
\begin{align}
 \mathcal{X}^s (\T^2) = \H^s (\T^2) \times
\prod_{\l = 0}^{k-1}  C([0,1]; W^{-\frac{\l}{k},\infty}(\T^2)) 
\times 
 C([0,1]; W^{-1+\eps,\infty}(\T^2)) 
\label{enh1}
\end{align}

\noi
for some small $\eps > 0$
and set
\[ \|\pmb{\Xi}\|_{\mathcal{X}^s}
= \| (v_0, v_1) \|_{\H^s} + \sum_{\l = 1}^{k-1} \| \Xi_\l\|_{C([0, 1]; W^{-\frac{\l}{k}, \infty})} 
+ \| \Xi_k\|_{C([0, 1]; W^{-1+\eps, \infty})} \]

\noi
for $\pmb{\Xi} = (v_0, v_1, \Xi_1, \Xi_2, \dots, \Xi_k) \in \mathcal{X}^s (\T^2)$. Then, we have the following local well-posedness result for \eqref{rNLW2}.

\begin{proposition}
\label{PROP:LWP2}
Let $k\geq 2$ be  an integer and $s \geq 1$.
Then,  \eqref{rNLW2} is locally well-posed in $\mathcal{X}^s (\T^2)$. More precisely, given an enhanced data set:
\begin{equation*}
\pmb{\Xi} = (v_0, v_1, \Xi_1, \Xi_2, \dots, \Xi_k) \in \mathcal{X}^s (\T^2),
\end{equation*}

\noi
there exist $0 < T = T(\| \pmb{\Xi} \|_{\mathcal{X}^s}) \leq 1$ and a unique solution 
$\vec v = (v, \dt v)  \in C([0,T]; \H^1 (\T^2))$
 to~\eqref{rNLW2}, 
 depending  continuously on the enhanced data set $\pmb{\Xi}$.
\end{proposition}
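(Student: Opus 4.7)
\smallskip

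The plan is to run a standard Banach fixed-point argument on the Duhamel formulation
\begin{equation*}
\Gamma \vec v\, (t) = \vec V(t)(v_0, v_1) - \sum_{\ell = 0}^{k}\binom{k}{\ell}\int_0^t \vec S(t-t')\,\big(\Xi_\ell(t')\, v^{k-\ell}(t')\big)\,dt'
\end{equation*}
in a ball in $C([0,T];\H^1(\T^2))$, exploiting the parabolic smoothing hidden in the Poisson kernel $P(t-t')= e^{-\frac{D}{2}(t-t')}$ inside $S(t-t')$ (see \eqref{S1}). Here $\vec V$ and $\vec S$ denote the linear evolutions for $(v,\dt v)$. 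The free evolution $\vec V(t)(v_0,v_1)$ clearly lies in $C([0,T];\H^s)$ for any $s\geq 0$, so all the work is in estimating the Duhamel contribution.

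The key linear estimate I would establish is
\begin{equation*}
\|S(t-t')F\|_{H^1}+\|\dt S(t-t')F\|_{L^2} \lesssim (t-t')^{-\sigma}\,\|F\|_{H^{-\sigma}}, \qquad 0\leq \sigma<1,
\end{equation*}
obtained by combining the trivial $H^{-\sigma}\to H^{1-\sigma}$ bound for $\sin(t\jbb{D})/\jbb{D}$ with the Schauder estimate (Lemma \ref{LEM:Sch}) applied to $P(t-t')$ with $p=q=2$; for the $\dt$-part one uses $\dt S(t)=-\tfrac{D}{2}S(t)+e^{-\frac{D}{2}t}\cos(t\jbb{D})$ and the fact that $S(0)=0$ kills the boundary term under $\dt \int_0^t$.

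Next, for each $\ell$ I would produce a deterministic product estimate. For $\ell = 0$, the term is $v^k$, and $\|v^k\|_{L^2}\lesssim \|v\|_{H^1}^k$ by the 2-$d$ Sobolev embedding $H^1(\T^2)\hookrightarrow L^p$, $p<\infty$; here no time-singular weight is needed ($\sigma=0$). For $1\leq \ell\leq k-1$, I would apply Lemma~\ref{LEM:bilin}(ii) with $s=\ell/k$, $p=\infty$, $q=r=2$ to get
\begin{equation*}
\|\Xi_\ell v^{k-\ell}\|_{H^{-\ell/k}} \lesssim \|\Xi_\ell\|_{W^{-\ell/k,\infty}}\,\|v^{k-\ell}\|_{H^{\ell/k}},
\end{equation*}
and then control $\|v^{k-\ell}\|_{H^{\ell/k}}\lesssim \|v\|_{H^1}^{k-\ell}$ by iterating the fractional Leibniz rule in Lemma~\ref{LEM:bilin}(i) together with the Sobolev embedding $H^1(\T^2)\hookrightarrow W^{\ell/k, p}$ for any $p<\infty$ (valid since $\ell/k<1$). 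For $\ell=k$ the term is simply $\Xi_k$, with $\|\Xi_k\|_{H^{-1+\eps}}\lesssim \|\Xi_k\|_{W^{-1+\eps,\infty}}$. Feeding these into the linear estimate gives an integrable weight $(t-t')^{-\max(\ell/k,\,1-\eps)}$ at each level.

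Summing and integrating in time produces the schematic bound
\begin{equation*}
\|\Gamma\vec v\|_{C_T\H^1} \lesssim \|(v_0,v_1)\|_{\H^1} + T^{\gamma}\,\Big(1+\|\pmb{\Xi}\|_{\mathcal X^s}\Big)\Big(1+\|\vec v\|_{C_T\H^1}\Big)^k
\end{equation*}
for some $\gamma=\gamma(k,\eps)>0$, together with an analogous difference estimate that is Lipschitz in $v$. Choosing $T=T(\|\pmb{\Xi}\|_{\mathcal X^s})$ sufficiently small makes $\Gamma$ a contraction on a ball of radius $\sim \|(v_0,v_1)\|_{\H^1}+1$, giving existence, uniqueness, and continuous dependence on $\pmb{\Xi}$ from the Lipschitz dependence of the fixed point on the data. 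Persistence of $H^s$-regularity for $s>1$ follows from the same estimates with $H^s$ replacing $H^1$. The main technical point to watch is that the Schauder smoothing is strong enough to absorb the most singular source $\Xi_k\in W^{-1+\eps,\infty}$ while still leaving a positive power of $T$; this works because the viscous dissipation produces full parabolic smoothing up to (but not including) one derivative, leaving exactly $\eps$ to spare.
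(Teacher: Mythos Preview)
Your proposal is correct and follows essentially the same route as the paper's proof: write the Duhamel formulation, use the Schauder smoothing of $P(t-t')$ to gain $\ell/k$ derivatives (respectively $1-\eps$ when $\ell=k$) at the cost of an integrable time weight, and control the products $\Xi_\ell\, v^{k-\ell}$ via Lemma~\ref{LEM:bilin} together with the Sobolev embedding $H^1(\T^2)\hookrightarrow L^p$, $p<\infty$. One minor technicality: Lemma~\ref{LEM:bilin}(ii) as stated requires $1<p,q,r<\infty$, so your application with $p=\infty$ is not literally covered---the paper sidesteps this by taking $p=2k$ (and $q=\tfrac{2k}{k+\ell-1}$) and using $\|\jb{\nabla}^{-\ell/k}\Xi_\ell\|_{L^{2k}}\lesssim\|\Xi_\ell\|_{W^{-\ell/k,\infty}}$, which you can do as well.
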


Note that 
Proposition~\ref{PROP:LWP2}
is completely deterministic.
Theorem \ref{THM:LWP}
immediately follows from 
 Proposition~\ref{PROP:LWP2} 
and 
Lemma \ref{LEM:Psi}
which states
 that 
the (random) enhanced data set
$\pmb{\Xi} = (v_0, v_1,  \Psi, :\!\Psi^2\!:, \dots, :\!\Psi^k\!:\, )$
almost surely belongs to 
 $\mathcal{X}^{s}(\T^2)$
 and that the 
the truncated (random) enhanced data set
$\pmb{\Xi}_N = (v_0, v_1,  \Psi_N, :\!\Psi_N^2\!:, \dots, :\!\Psi_N^k\!:\, )$
converges 
almost surely  to $\pmb{\Xi}$ in 
 $\mathcal{X}^{s}(\T^2)$.

\begin{proof}
By writing \eqref{rNLW2} in the Duhamel formulation, we have
\begin{equation}
v(t) = \G (v)\deff V(t) (v_0,v_1) - \sum_{\ell = 0}^k \binom{k}{\ell} \int_0^t S(t-t') \big(\Xi_\ell \, v^{k-\ell}\big)(t') dt',
\label{L1}
\end{equation}

\noi
where $V(t)$ and $S(t)$ are as in \eqref{S2} and \eqref{S1} respectively.
Let $\vec \G(v) = (\G(v), \dt \G(v))$.

 Fix $0 < T \leq 1$.
We first consider  the case $\ell = 0$. From \eqref{S1}  and Sobolev's inequality, we have
\begin{align}
\begin{split}
\bigg\| \int_0^t &  S(t-t') v^k (t')  dt' \bigg\|_{C_T H_x^1} 
 + \bigg\|\dt  \int_0^t S(t-t') v^k (t') dt' \bigg\|_{C_T L^2_x} \\
&\les \bigg\| \int_0^t \big\| \sin( (t-t')\jbb{D}) v^k (t') \big\|_{L_x^2} dt' \bigg\|_{C_T} \\
&\les T \big\| v^k \big\|_{C_T L_x^2}  \les T \| v \|_{C_T L_x^{2k}}^k \\
&\les T \| v \|_{C_T H_x^1}^k.
\end{split}
\label{L2}
\end{align}

\noi
Next, let  $1 \leq \ell \leq k-1$. 
From Lemma \ref{LEM:Sch}, Lemma \ref{LEM:bilin}\,(ii) and then (i) followed by Sobolev's inequality, 
we have 
\begin{align}
\begin{split}
\bigg\| & \int_0^t S(t-t')  \big( \Xi_\ell \, v^{k-\ell} \big) (t') dt' \bigg\|_{C_T H_x^1} 
+ \bigg\| \dt \int_0^t S(t-t') \big( \Xi_\ell \, v^{k-\ell} \big) (t') dt' \bigg\|_{C_T L^2_x}\\ 
&\les T^{\frac{k-\l}{k}} \big\| \Xi_\ell \, v^{k-\ell} \big\|_{C_T H_x^{-\frac{\l}{k}}} 
\les T^{\frac{k-\l}{k}} \big\| \jb{\nb}^{-\frac{\l}{k}} \Xi_\ell \big\|_{C_T L_x^{2k}}
 \big\| \jb{\nb}^{\frac{\l}{k}} v^{k-\ell} \big\|_{C_T L_x^\frac{2k}{k+\l-1}} \\
&\les T^{\frac{k-\l}{k}} \| \pmb{\Xi} \|_{\mathcal{X}^s}
 \big\| \jb{\nb}^{\frac{\l}{k}} v \big\|_{C_T L_x^{\frac{2k}{\l}}}
\| v \|_{C_T L_x^{\frac{2k(k-\l-1)}{k-1}} }^{k-\l-1}\\
&\les T^{\frac{k-\l}{k}} \| \pmb{\Xi} \|_{\mathcal{X}^s} \| v \|_{C_T H_x^1}^{k-\ell}.
\end{split}
\label{L3}
\end{align}

\noi
Lastly, when $\l  = k$, it follows from \eqref{S1} and Lemma \ref{LEM:Sch} that 
\begin{equation}
\begin{split}
\bigg\| &  \int_0^t S(t-t') \Xi_k (t') dt' \bigg\|_{C_T H_x^1} 
+ \bigg\| \dt \int_0^t S(t-t') \Xi_k (t') dt' \bigg\|_{C_T L^2_x} \\
& \les 
T^{\eps} \| \Xi_k \|_{C_T H_x^{-1+\eps}} \leq T^{\eps} \| \pmb{\Xi} \|_{\mathcal{X}^s}.
\end{split}
\label{L4}
\end{equation}

\noi
Putting \eqref{L1}, \eqref{L2}, \eqref{L3}, and \eqref{L4} together, we obtain
\[ \| \vec \G(v) \|_{C_T \H_x^1} \leq C_1 \| (v_0,v_1) \|_{\H^1} 
+ C_2 T^\ta \big(1 + \| \pmb{\Xi} \|_{\mathcal{X}^s}\big) \big(1 + \| \vec v \|_{C_T \H_x^1}\big)^k \]

\noi
for some $\ta > 0$.
A similar computation yields
a difference estimate on $\vec \G(v_1)- \vec \G(v_2)$. Therefore, by choosing $T = T( \| \pmb{\Xi} \|_{\mathcal{X}^s} ) > 0$ sufficiently small, we conclude that $\G$ is a contraction in the ball $B_R \subset C([0,T]; \H^1(\T^2))$ of radius $R\sim  \| (v_0,v_1) \|_{\H^1} $.
\end{proof}

\begin{remark}\label{REM:LWP3} \rm
(i) In view of \eqref{enh1}
and an analogue of Lemma \ref{LEM:Psi} 
(for a rougher noise), we see that 
the proof of Proposition \ref{PROP:LWP2}
yields local well-posedness of \eqref{SNLW9}
with a rougher noise $\sqrt 2 D^\al \xi$
for $\al < \frac 12 + \frac 1k$
such that 
the corresponding enhanced data set $\pmb{\Xi} = (v_0, v_1,  \Psi, :\!\Psi^2\!:, \dots, :\!\Psi^k\!:\, )$
 belongs to 
 $\mathcal{X}^{s}(\T^2)$ almost surely.
It may be possible to improve the local well-posedness argument above
by using the wave Strichartz estimates.

\smallskip

\noi
(ii) 
Suppose that  $(v_0,v_1)$ lies in $\H^s (\T^2)$ for some $ s < 1$. 
From \eqref{S2} and Lemma \ref{LEM:Sch}, we have 
\[ \| V(t)(v_0,v_1) \|_{H_x^1} \les t^{s-1} \| (v_0,v_1) \|_{\H^s} \]
for any $0 < t \leq 1$. 
Then, given $0 < T \leq 1$, we define 
 the $Y^{1, s} (T)$-norm by 
\begin{equation*}
\| (v, \dt v)  \|_{Y^{1, s} (T)} = \sup_{0 \leq t \leq T} t^{1-s} \| (v, \dt v) (t) \|_{\H^1_x}, 
\end{equation*}

\noi
where a function is allowed to blow up at time $t = 0$.
 By slight modifications of \eqref{L2}, \eqref{L3}, and \eqref{L4}, we obtain
\[ \| \vec \G(v) \|_{Y^{1, s} (T)} \les \| (v_0,v_1) \|_{\H^s} +  T^{\ta} 
\big(1 + \| \pmb{\Xi} \|_{\mathcal{X}^s}\big) 
\big(1 + \| \vec v \|_{Y^{1, s} (T)}\big)^k \]
for some $\ta>0$,  provided that $s > \frac{k-1}{k}$. 
A similar computation also yields a difference estimate.
This proves existence of the unique solution 
 $(v, \dt v)  \in C((0,T]; \H^1(\T^2))
 \cap C([0,T]; \H^s(\T^2))$, where the latter regularity may be shown a posteriori. 
\end{remark}

\section{Pathwise global well-posedness in the cubic case}
\label{SEC:GWP1}
In this section, we prove pathwise global well-posedness of 
\eqref{SNLW11}
with 
$(v, \dt v)|_{t = 0}
= (u_0, u_1)
 \in \H^1 (\T^2)$ and $\wick{\Psi^\l} \, \in C([0,T]; W^{-\eps,\infty}(\T^2))$, $\eps> 0$, 
 almost surely
 for 
$\ell = 1, 2, 3$.
As mentioned in Section \ref{SEC:1}, 
our main goal is to control the growth of 
  the energy $E(\vec v)$ in \eqref{E1} (with $k = 3$).

Given $T> 0$,  we fix $0 \leq t \leq T$ and suppress $t$-dependence in the following.
By \eqref{E1} and \eqref{SNLW11}, we have 
\begin{align}
\begin{split}
\dt E (\vec v) &= \int_{\T^2} (\dt v) \big\{ \dt^2 v + (1-\Dl) v + v^3 \big\} dx \\
&= -3\int_{\T^2} (\dt v) v^2 \Psi dx - 3\int_{\T^2} (\dt v) v \wick{\Psi^2} dx - \int_{\T^2} (\dt v) \wick{\Psi^3} dx \\
&\quad - \int_{\T^2} \big( D^{\frac{1}{2}} \dt v \big)^2 dx \\
&=: A_1 + A_2 + A_3 - \| D^{\frac{1}{2}} \dt v \|_{L^2}^2.
\end{split}
\label{E2}
\end{align}

\noi
\noi
Given $T\gg1$,   we set $B( T) = B(T; \Psi)$ by 
\begin{align}
B( T) 
& = 1 
+ \|  \wick{\Psi^2} \|_{L^\infty_TW_x^{-\frac 12, \infty}}^2
+ \|  \wick{\Psi^3} \|_{L^\infty_TW_x^{-\frac 12, \infty}}^2
+ \eps  \| \Psi \|_{L^\infty_TW_x^{-\eps, \infty}}^2
\label{E1a}
\end{align}

\noi
for some small $\eps > 0$.
By Cauchy-Schwarz's
 inequality, Cauchy's inequality, and  Lemma~\ref{LEM:bilin}\,(ii)
 with \eqref{E1a}, we have 
\begin{align}
\begin{split}
|A_2| &\leq 3 \| \jb{\nb}^{\frac{1}{2}} \dt v \|_{L^2} 
\| \jb{\nb}^{-\frac{1}{2}}  v \wick{\Psi^2} \|_{L^2} \\
&\leq C  \| \dt v \|_{L^2}^2 + \dl \| D^{\frac{1}{2}} \dt v \|_{L^2}^2  
+ C_\dl  \| \jb{\nb}^{\frac{1}{2}} v \|_{L^2}^2 \| \jb{\nb}^{-\frac{1}{2}} \wick{\Psi^2} \|_{L^\infty}^2 \\
&\leq C_\dl B(T) E(\vec v) +  \dl \| D^{\frac{1}{2}} \dt v \|_{L^2}^2
\end{split}
\label{E3}
\end{align}

\noi
for some small $\dl > 0$.
Similarly, we have 
\begin{align}
\begin{split}
|A_3| &\leq \| \jb{\nb}^{\frac{1}{2}} \dt v \|_{L^2} \| \jb{\nb}^{-\frac{1}{2}} \wick{\Psi^3} \|_{L^2} \\
&\leq C \| \dt v \|_{L^2}^2 + \dl \| D^{\frac{1}{2}} \dt v \|_{L^2}^2  
+ C_\dl \| \jb{\nb}^{-\frac{1}{2}} \wick{\Psi^3} \|_{L^\infty}^2 \\
&\leq C E(\vec v) + C_\dl B(T) + \dl \| D^{\frac{1}{2}} \dt v \|_{L^2}^2.
\end{split}
\label{E4}
\end{align}

It remains to estimate  $A_1$. 
First,  note that from the embedding $L^{\frac 4{1+\eps}}(\T^2)\subset L^{2+\eps}(\T^2)$ and interpolation, we have 
\begin{equation}
\| v\|_{W^{\eps, 2+\eps}}
\les 
\| v\|_{W^{\eps, \frac 4 {1+\eps}}}
\les \|v\|_{H^1}^\eps \|v\|_{L^4}^{1-\eps}
  \les E^\frac \eps2( \vec v)  E^\frac {1-\eps}4(\vec v)
= E^{\frac{1+\eps}{4}} (\vec v)
\label{E5}
\end{equation}
for sufficiently small $0 < \eps \ll 1$. 
For simplicity of notation, we set $E(t) = E(\vec v(t))$. 
In the following, 
we assume that $E(t) >  1$.

For  $0 < t_1 \leq t_2 \leq T$, let
\begin{equation*}
\mathcal A_1(t_1, t_2)  = \int_{t_1}^{t_2} \int_{\T^2} \dt v \cdot v^2\cdot \Psi \, dx dt.
\end{equation*}

\noi
Then, 
from H\"older's inequality, Lemma \ref{LEM:bilin}\,(i), 
Cauchy's inequality,  Sobolev's inequality, 
and \eqref{E5} with \eqref{E1a}, we have 
\begin{align}
\begin{split}
|\mathcal A(t_1, t_2) | & = \bigg|\int_{t_1}^{t_2} \int_{\T^2} \jb{\nb}^{\eps} \big(\dt v \cdot v^2\big) \cdot\jb{\nb}^{-\eps} \Psi \, dx dt\bigg|\\
& \le C \int_{t_1}^{t_2}\| \dt v (t')\|_{W^{\eps, 2+\eps}} \|v(t') \|_{L^4}^2   dt'
\cdot \| \Psi \|_{L^\infty_T W_x^{-\eps, \infty}}\\
& \quad 
+  C \int_{t_1}^{t_2}
\| \dt  v (t')\|_{L^4}
\|v(t') \|_{W^{\eps, 2+\eps}}
\|v(t') \|_{L^4}   dt'
\cdot \| \Psi \|_{L^\infty_TW_x^{-\eps, \infty}}\\
& \le\int_{t_1}^{t_2} 
\Big(\dl  \| D^\frac 12 \dt v (t')\|_{L^2}^2
+    \eps^{- 1} C_\dl B(T) E(t') \Big) dt'\\
&\quad  +    \eps^{-1} C_\dl B(T)\int_{t_1}^{t_2}
\|v(t') \|_{W^{\eps, 2+\eps}}^2 E(t')^{\frac 12} dt'\\
& \le \dl \int_{t_1}^{t_2} 
 \| D^{\frac 12} \dt v (t')\|_{L^2}^2dt'
+   \eps^{-1}  C_\dl B(T)\int_{t_1}^{t_2}
E^{1 + \frac{\eps}{2}}(t') dt'
\end{split}
\label{E7}
\end{align}

\noi
for any sufficiently small $\eps > 0$.
Let us now consider the second term on the right-hand side of \eqref{E7}
with $p = 2\eps^{-1}$:
\begin{align*}
A(p) = p\int_{t_1}^{t_2}
E^{1+ \frac 1 p}(t') dt'.
\end{align*}

\noi
By optimizing $A(p)$ in $p$  at $p =  \log E(t')$ (for each fixed $t'$)
and noting that $x^\frac{c}{\log x} \les 1$ for any $x > 1$, 
it follows from \eqref{E7} that 
\begin{align}
\begin{split}
|\mathcal A(t_1, t_2) |
& \le \dl \int_{t_1}^{t_2} 
\| D^\frac 12 \dt v (t')\|_{L^2}^2 dt'
+   C_\dl B(T) \int_{t_1}^{t_2}
E(t') \log E(t')dt'.
\end{split}
\label{E8}
\end{align}

\noi
Combining \eqref{E2}, \eqref{E3}, \eqref{E4}, and \eqref{E8}, 
we then obtain the following Gronwall bound:
\begin{align}
E(t_2) - E(t_1) 
& \leq
  C\cdot B(T) \int_{t_1}^{t_2}
E(t') \log E(t')dt'
\label{E9}
\end{align}

\noi
for any $0 < t_1 \leq t_2 \leq T$. By solving \eqref{E9}, we obtain
\begin{equation}
E(t) \les e^{e^{C\cdot B(T)t}}
\label{E10}
\end{equation}

\noi
for any $0 < t \leq T$.

Lastly, note that from Lemma \ref{LEM:Psi}, 
we see that $B(T) < \infty$,  almost surely.
Furthermore, the choice of $T \gg1 $ was arbitrary.
Therefore, we conclude that 
the $\H^1$-norm of the solution $(v, \dt v)(t)$
to \eqref{SNLW11} remains finite
on any finite time interval $[0, T]$, almost surely, 
thus allowing us to iteratively apply Proposition \ref{PROP:LWP2}.
This concludes the proof of Theorem~\ref{THM:GWP1}.

\begin{remark}\label{REM:2}\rm
(i) In order to justify 
the formal computation in this subsection, 
we need to proceed with the smooth solution $(v_N , \dt v_N)$
to the truncated equation \eqref{SNLW6a}
with  the frequency truncated  initial data $(\P_N u_0, \P_N u_1)$
(for example, to guarantee  finiteness
of the last term on the right-hand side of \eqref{E2})
and then take $N \to \infty$ in \eqref{E10} by  noting that the implicit constant is
independent of $N \in \N$ and by using Proposition \ref{PROP:LWP2}
(namely, the continuous dependence of a solution on an enhanced data set).
This argument, however, is standard and thus we omit details.
See, for example,  \cite{OP1}.

\smallskip

\noi
(ii) By refining the argument, it is possible to obtain a double exponential bound
\begin{equation*}
\|(v, \dt v)(t)\|_{\H^1} \leq C e^{e^{C(\o)t^\ta}}
\end{equation*}

\noi
for some $\ta > 0$ and an almost surely finite random constant $C(\o)>0$.
We, however, do not pursue this issue here.
See, for example, 
Remark 3.7 in \cite{GKOT}.

\end{remark}

\section{Almost sure global well-posedness}\label{SEC:GWP2}

In this section, we first sketch the proof of almost sure global well-posedness
with the Gaussian random initial data $(u_0^\o, u_1^\o)$ in \eqref{IV1}
(Theorem \ref{THM:GWP2}), 
and then briefly discuss the proof of 
almost sure global well-posedness
with the shifted initial data $(v_0, v_1) + (u_0^\o, u_1^\o)$
(Corollary \ref{COR:GWP3}).

\subsection{Invariant measure argument}
\label{SUBSEC:GWP2a}

As mentioned in Section \ref{SEC:1}, 
the main strategy for proving Theorem \ref{THM:GWP2} is to 
(i)~use the mutual absolute continuity
of $\muu_1$ (the law of the random initial data 
$(u_0^\o, u_1^\o)$ in \eqref{IV1})
and the Gibbs measure $\rhoo$ in~\eqref{Gibbs1}
and 
(ii)~then apply Bourgain's invariant measure argument.
In the following, 
given a random variable $X$, 
let $\L(X)$ denote the law of $X$.

For this purpose, we first review the construction of the Gibbs measure $\rhoo$.
Given $N \in \N$, 
consider the  truncated Gibbs measure:
\begin{align}\label{GibbsN}
d\rhoo_N(u,\dt u )= Z_N^{-1}R_N(u)d\muu_1(u,\dt u)
\end{align}

\noi
with the  truncated renormalized density:
\begin{align}
R_N(u) = 
\exp\bigg( - \frac{1}{k+1} 
\int_{\T^2} :\!(\P_N u)^{k+1}(x) \!: dx\bigg), 
\label{R1}
\end{align}

\noi
where the Wick power $:\!(\P_N u)^{k+1} \!:$ is defined by  
\begin{align*}
:\!(\P_N u)^{k+1}(x) \!:
 \, \deff  H_{k+1} (\P_N u(x); \al_N)
\end{align*}

\noi
with $\al_N$ as in \eqref{sig2}.
Then, it is known that $\{R_N \}_{N \in \N}$
converges to some $R(u)$  in $L^p(\muu_1)$
for any finite $p \geq 1$
and thus the truncated Gibbs measure $\rhoo_N$ in \eqref{GibbsN}
converges, say in total variation, 
to the renormalized Gibbs measure $\rhoo$ given by
\begin{align}
\begin{split}
d\rhoo(u,\dt u )
& = Z^{-1} R(u)d\muu_1(u, \dt u )\\
& = Z^{-1} \exp\bigg( - \frac{1}{k+1} 
\int_{\T^2} :\! u^{k+1}(x) \!: dx\bigg)d\muu_1(u, \dt u ).
\end{split}
\label{Gibbs3}
\end{align}

\noi
Furthermore, 
the resulting Gibbs measure $\rhoo$ 
and $\muu_1$ are  mutually absolutely continuous.
See \cite{Simon, GJ, DPT1, OTh1} for details.

Next, 
we turn our attention to the well-posedness part.
Let us consider the following  truncated SvNLW:
\begin{align}
\dt^2 u_N    +(1-\Dl)  u_N +  D \dt u_N 
+
\P_N\big(:\!(\P_N u_N)^{k} \!: \big) 
   = \sqrt{2} D^\frac{1}{2}\xi 
\label{xNLW1}
\end{align} 

\noi
and its formal limit: 
\begin{align}
\dt^2 u     +(1-\Dl)  u  + D \dt u 
+
:\!u^{k} \!: \, 
   = \sqrt{2}  D^\frac{1}{2}\xi  .
\label{xNLW2}
\end{align}

\noi
As we see below, 
 the  truncated Gibbs measure $\rhoo_N$ in \eqref{GibbsN}
 is invariant under the truncated dynamics \eqref{xNLW1}.
Then, Bourgain's invariant measure argument \cite{BO94, BO96} yields
the following almost sure global well-posedness.

 \begin{theorem}\label{THM:GWP3}
Let $k \in 2 \N + 1$.
Then, the  renormalized SvNLW~\eqref{xNLW2} is almost surely globally well-posed with respect to the renormalized Gibbs measure~$\rhoo$ in~\eqref{Gibbs3}. Furthermore, the renormalized Gibbs measure $\rhoo$ is invariant under the dynamics.

More precisely, there exists a non-trivial stochastic process $(u,\dt u)\in C(\R_+;\H^{-\eps}(\T^2))$ for any $\eps>0$ such that, given any $T>0$, the solution $(u_N,\dt u_N)$ to 
the  truncated SvNLW~\eqref{xNLW1} with 
$\L((u_N(0), \dt u _N(0)) = \rhoo_N$ converges in probability to 
some stochastic process $(u,\dt u)$ in $C([0,T];\H^{-\eps}(\T^2))$.
Moreover,  we have  $\L (u(t),\dt u(t)) = \rhoo$ for any $t \ge 0$.
\end{theorem}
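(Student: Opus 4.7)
The strategy is to execute Bourgain's invariant measure argument \cite{BO94, BO96} at the truncated level and then pass to the limit. First I would establish invariance of the truncated Gibbs measure $\rhoo_N$ in \eqref{GibbsN} under the truncated SvNLW flow \eqref{xNLW1}. Second, I would combine this invariance with the local theory of Proposition \ref{PROP:LWP2} and the exponential tail bounds of Lemma \ref{LEM:Psi} to produce uniform-in-$N$ global existence for $(u_N,\dt u_N)$ on a set of full $\rhoo_N$-measure. Third, I would pass to the limit $N\to\infty$ using the convergence $R_N\to R$ in $L^p(\muu_1)$ and the a.s.\ convergence $\pmb\Xi_N \to \pmb\Xi$ in the enhanced data space $\mathcal{X}^s$, together with the continuity of the solution map supplied by Proposition \ref{PROP:LWP2}.

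For the invariance step, the key point is to view \eqref{xNLW1} as the ``superposition'' of the renormalized Hamiltonian NLW flow
\[
\dt^2 u_N + (1-\Dl)u_N + \P_N(:\!(\P_N u_N)^k\!:)=0
\]
and the Ornstein--Uhlenbeck flow $\dt(\dt u_N) = -D\,\dt u_N + \sqrt 2\, D^{\frac12}\xi$. Splitting Fourier modes into $|n|\le N$ and $|n|>N$ reduces the low-frequency part to a finite-dimensional SDE, for which the Fokker--Planck/generator calculation applies. The Hamiltonian component preserves $\rhoo_N$ because the truncated energy
\[
E_N(\vec u) = \tfrac12\|u\|_{H^1}^2 + \tfrac12\|\dt u\|_{L^2}^2 + \tfrac{1}{k+1}\!\int_{\T^2}\!:\!(\P_N u)^{k+1}\!:\,dx
\]
is conserved along its flow and $d\rhoo_N \propto e^{-E_N}\,du\,d(\dt u)$. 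The OU component preserves the Gaussian marginal $\mu_0$ on $\dt u$: indeed, the stationary Lyapunov relation $DC+CD=2D$ for $dX=-DX\,dt+\sqrt 2 D^{\frac12}dW$ gives $C=\Id$, matching white noise exactly (this is where the factor $\sqrt 2$ from Remark \ref{REM:Gibbs} is crucial). The high-frequency component is linear and driven by $\P_N^\perp\xi$, preserving the Gaussian high-frequency marginal of $\muu_1$.

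Once invariance is established, the Bourgain iteration proceeds as follows. Proposition \ref{PROP:LWP2} gives, for initial data and stochastic objects in a ball of radius $R$ in $\mathcal{X}^s$, a deterministic lifespan $T\sim T(R)$. Using Chebyshev's inequality together with the uniform bound on $\|R_N\|_{L^p(\muu_1)}$, one controls the $\rhoo_N$-measure of the set of initial data for which the solution leaves a ball of radius $R$ on a time interval of length $T(R)$; the tail estimate \eqref{P0z} of Lemma \ref{LEM:Psi} then governs the probability that the stochastic data blow up. Choosing $R=R(t)$ growing (at most) as a double exponential in $t$ and summing over the dyadic time intervals $[j,j+1]$ via Borel--Cantelli yields a global flow on a set of full $\rhoo_N$-measure, uniformly in $N$.

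Finally, to pass to the limit, the continuity of the solution map $\pmb\Xi\mapsto (v,\dt v)$ from Proposition \ref{PROP:LWP2} combined with the a.s.\ convergence of $\pmb\Xi_N$ to $\pmb\Xi$ (Lemma \ref{LEM:Psi}) upgrades the local convergence to convergence in probability in $C([0,T];\H^{-\eps}(\T^2))$, after splicing together Bourgain sets for $\rhoo_N$ using the total-variation convergence $\rhoo_N\to\rhoo$. Invariance of $\rhoo$ is then inherited by passing to the limit in $\E[F(u_N(t),\dt u_N(t))]=\int F\,d\rhoo_N$ for bounded continuous $F$. Theorem \ref{THM:GWP2} follows from Theorem \ref{THM:GWP3} by mutual absolute continuity of $\rhoo$ and $\muu_1$. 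The main obstacle is Step~1: the equation is neither Hamiltonian nor parabolic, so some care is needed to make the splitting rigorous in the truncated, finite-dimensional setting and to verify that the precise renormalization in \eqref{R1} matches the Wick powers appearing in the dynamics; the surrounding Bourgain iteration, by contrast, is by now standard.
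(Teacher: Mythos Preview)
Your proposal is correct and follows essentially the same route as the paper: factor $\rhoo_N=\vec\nu_N\otimes\muu_{1,N}^\perp$, verify invariance of the truncated measure by splitting the low-frequency generator as $\L^N=\L^N_1+\L^N_2$ (Hamiltonian NLW plus Ornstein--Uhlenbeck on $\dt u$), invoke Liouville's theorem for $\L^N_1$ and stationarity of white noise for $\L^N_2$, and then run Bourgain's iteration with Proposition~\ref{PROP:LWP2} and Lemma~\ref{LEM:Psi} before passing to the limit via $\rhoo_N\to\rhoo$ in total variation. One small calibration: the growth rate $R(t)$ coming out of Bourgain's argument here is only logarithmic in $t$ (see the remark at the end of Section~\ref{SEC:GWP2}), not double exponential---the latter is the bound from the Yudovich-type energy argument in Section~\ref{SEC:GWP1}, which is a separate globalization mechanism.
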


The proof of Theorem \ref{THM:GWP3}
follows exactly the same steps as in the proof
of the almost sure global well-posedness to 
the stochastic damped NLW studied in \cite{GKOT}:
\begin{align*}
\dt^2 u + \dt u + (1 -  \Dl)  u   \, + \wick{u^k}\,  = \sqrt 2\xi
\end{align*}

\noi
and thus we only sketch the key steps in the following.

 Let  us first  describe the precise meaning of the renormalizations in \eqref{xNLW1}
 and \eqref{xNLW2}.
 Write the solution 
 $u_N$  to \eqref{xNLW1}
 with $\L\big((u_N, \dt u_N)|_{t=0}\big) = \muu_1$
 as 
\begin{align*}
u_N = v_N + \Phi
= (v_N +  \Phi_N) + \P_N^\perp \Phi,
\end{align*}

\noi
where $\P_N^\perp = \Id - \P_N$
and $\Phi$ denotes the stochastic convolution defined in \eqref{phi2}
(recall that\footnote{A tedious but direct computation, as in \eqref{sig2},  shows that 
$\E\big[|\ft {\dt \Phi}(t, n)|^2 \big] = 1$ for any $n \in \Z^2$.} $\L( (\Phi(t), \dt \Phi(t) ) ) = \muu_1$ for any $t \geq 0$).
Then, 
we see that \eqref{xNLW1}
decouples into 
the linear dynamics for the high frequency part
$\P_N^\perp u_N = \P_N^\perp\Phi$:
\begin{align}
\dt^2 \P_N^\perp\Phi  +(1-\Dl) \P_N^\perp\Phi 
+ D\dt  \P_N^\perp\Phi
 = \sqrt{2} \P_N^\perp
D^\frac{1}{2}\xi
\label{xNLW3}
\end{align}

\noi
and 
the nonlinear dynamics for the low frequency part $\P_N u_N$:
\begin{align}
\dt^2 \P_N u_N     +(1-\Dl)  \P_N u_N 
+ D\dt \P_N u_N
+
\P_N\big(:\!(\P_N u_N)^{k} \!: \big) 
   = \sqrt{2} \P_ND^\frac{1}{2} \xi   .
\label{SNLW11a}
\end{align}

\noi
In terms of  $v_N = \P_N u_N - \Phi_N $, we can write \eqref{SNLW11a}
as 
 \begin{align}
\begin{cases}
\dt^2 v_N + (1-\Dl)v_N  + D\dt v_N +
 \sum_{\ell=0}^k {k\choose \ell} \P_N \big(:\!\Phi_N^\l\!:  v_N^{k-\ell}\big)
=0\\
(v_N,\dt v_N)|_{t = 0}=(0,0), 
\end{cases}
\label{SNLW12}
\end{align}

\noi
where the Wick power is defined by 
\begin{align*}
:\!\Phi_N^\l (t, x) \!:
 \, \deff  H_{\l} (\Phi_N(t, x); \al_N), 
\end{align*}

\noi
where  $\al_N$ is as in \eqref{sig2}.
By taking $N \to \infty$,  
we  obtain the limiting equation:
 \begin{align}
\begin{cases}
\dt^2 v + (1-\Dl)v  +
D\dt v +
 \sum_{\ell=0}^k {k\choose \ell} :\!\Phi^\l\!:  v^{k-\ell}
=0\\
(v,\dt v)|_{t = 0}=(0,0).
\end{cases}
\label{SNLW13}
\end{align}

\noi
In view of Lemma \ref{LEM:Psi}, 
the local well-posedness result (Proposition \ref{PROP:LWP2})
applies to \eqref{SNLW12}
and \eqref{SNLW13}, uniformly in $N \in \N$.
Furthermore, the solution $v_N$
converges to $v$ 
on the (random) time interval of local existence.

Once 
we check invariance of 
the truncated Gibbs measure $\rhoo_N$ in \eqref{GibbsN} under 
the truncated SvNLW  dynamics~\eqref{xNLW1}, 
the rest of the proof of Theorem \ref{THM:GWP3}
follows from a standard application of Bourgain's invariant measure argument.
See, for example,~\cite{ORTz,  OOTol, Bring2, OOTol2}
for details in the context of stochastic nonlinear wave equations.
See also  \cite{OTh2}.

\medskip
Invariance of the truncated Gibbs measure $\rhoo_N$
under 
the truncated SvNLW  dynamics~\eqref{xNLW1} follows
from exactly the same argument presented in Section 4 of \cite{GKOT}.
For readers' convenience, however, 
we sketch the argument.
Given $N \in \N$, define
the marginal probabilities measures
$\muu_{1, N}$ and $\muu_{1, N}^\perp$
on $\P_N \H^{-\eps}(\T^2)$
and $\P_N^\perp \H^{-\eps}(\T^2)$, respectively, 
as 
   the induced probability measures
under the map $T_N$ for $\muu_{1, N}$
and $T_N^\perp$ for $\muu_{1, N}^\perp$,  where 
\begin{equation*}
T_N: \o \in \O \longmapsto (\P_N u_0^\o, \P_N u_1^\o)
\qquad \text{and}
\qquad 
T_N^\perp: \o \in \O \longmapsto (\P_N^\perp u_0^\o, \P_N^\perp u_2^\o), 
 \end{equation*}

\noi
where $(u_0^\o, u_1^\o)$ is as in \eqref{IV1}.
Then, with $\muu_1 = 
\muu_{1, N} \otimes \muu_{1, N}^\perp$, 
it follows from \eqref{GibbsN}  that 
\begin{align}
\rhoo_N = \vec \nu_{N} \otimes \muu_{1, N}^\perp, 
\label{X8a}
\end{align}

\noi
where $\vec \nu_N$ is given by
$d \vec \nu_N= Z_N^{-1}R_N(u)d\muu_{1, N}$
with the density $R_N$ as in \eqref{R1}.
The high frequency dynamics 
\eqref{xNLW3} is linear and thus we can readily verify
that the Gaussian measure $\muu_{1, N}^\perp$
is invariant under the dynamics of \eqref{xNLW3}.
Hence, it remains to check invariance of $\vec \nu_N$
under the low frequency dynamics \eqref{SNLW11a}.

With $(u_N^1, u_N^2) = (\P_N u_N, \dt \P_N u_N)$, we
can write
\eqref{SNLW11a}  in the following Ito formulation:
\begin{align}
\begin{split}
d  \begin{pmatrix}
u_N^1 \\ u_N^2
\end{pmatrix}
& = -  \Bigg\{
\begin{pmatrix}
0  & -1\\
1-\Dl &  0
\end{pmatrix}
 \begin{pmatrix}
u_N^1 \\ u_N^2
\end{pmatrix}
 +  
\begin{pmatrix}
0 \\ \P_N\big( :\!(u_N^1)^k\!:\big)
\end{pmatrix}
\Bigg\} dt \\
& \quad + 
  \begin{pmatrix}
0  \\ - D u_N^2 dt + \sqrt 2\P_N D^\frac{1}{2} dW
\end{pmatrix} .
\end{split}
\label{SNLW16}
\end{align}

\noi
This shows that the generator $\L^N$ of the Markov semigroup for  \eqref{SNLW16}
 can be written as $\L^N = \L^N_1 + \L^N_2$, 
 where $\L^N_1$ denotes the generator
 for the deterministic NLW with the truncated nonlinearity:
\begin{align}
\begin{split}
d  \begin{pmatrix}
u_N^1 \\ u_N^2
\end{pmatrix}
 = -  \Bigg\{
\begin{pmatrix}
0  & -1\\
1-\Dl &  0
\end{pmatrix}
 \begin{pmatrix}
u_N^1 \\ u_N^2
\end{pmatrix}
 +  
\begin{pmatrix}
0 \\ \P_N\big( :\!(u_N^1)^k\!:\big)
\end{pmatrix}
\Bigg\} dt 
\end{split}
\label{SNLW17}
\end{align}

\noi
and $\L^N_2$ denotes the generator
for the Ornstein-Uhlenbeck process: 
\begin{align}
d   u_N^2
   = 
 - D u_N^2 dt + \sqrt 2\P_N  D^\frac{1}{2}dW.
\label{SNLW18}
\end{align}

Invariance of 
 $\vec \nu_N$  under the dynamics of \eqref{SNLW17}
 follows easily 
 from  the Hamiltonian structure of 
  \eqref{SNLW17}  with the Hamiltonian:
\begin{align*}
E_N (u^1_N, u^2_N ) = \frac{1}{2}\int_{\T^2}\big( (u^1_N)^2 +  |\nb u^1_N|^2\big) dx
+ 
\frac{1}{2}\int_{\T^2} (u^2_N)^2dx
- \log \big(R_N(u^1_N)\big), 
\end{align*}
 
\noi
where $R_N$ is as in \eqref{R1}, 
in particular, 
   the conservation of the Hamiltonian 
$E_N(u^1_N, u^2_N )$ and Liouville's theorem
(on a finite-dimensional phase space $\P_N \H^{-\eps}(\T^2)$).
Hence, we have 
 $(\L^N_1)^*\vec \nu_N = 0$.

As for \eqref{SNLW18}, 
recalling that 
 the Ornstein-Uhlenbeck process
preserves the standard Gaussian measure
(at each frequency on the Fourier side), 
we see  that $\vec \nu_N$ is also invariant under the dynamics of \eqref{SNLW18}, 
since, on the second component $u_N^2$, 
 the measure $\vec \nu_N$ is nothing but the  white noise $\mu_0$
(projected onto the low frequencies $\{|n|\leq N\}$).
Hence, we have  $(\L^N_2)^*\vec \nu_N = 0$.
Therefore, we conclude that 
\[(\L^N)^*\vec \nu_N = 
(\L^N_1)^*\vec \nu_N +  (\L^N_2)^*\vec  \nu_N = 0.\]

\noi
This shows invariance of $\vec \nu_N$ under \eqref{SNLW16}
and hence under \eqref{SNLW11a}.
Finally, 
invariance of 
 the truncated Gibbs measure 
$\rhoo_N$ in~\eqref{GibbsN}
under the  truncated SvNLW dynamics \eqref{xNLW1}
follows from 
\eqref{X8a}
and the invariance of   $\muu^\perp_{1, N}$  and $\vec \nu_N$
under \eqref{xNLW3} and \eqref{SNLW16}, respectively.

\begin{remark}\rm
As a consequence of 
 Bourgain's invariant measure argument, 
the solution $(u, \dt u)$ to \eqref{SNLW9}  constructed in Theorem \ref{THM:GWP2}
satisfies 
 the following logarithmic growth bound:
\[ \| (u(t), \dt u(t)) \|_{\H^{-\eps}} \leq C(\o) \big(\log (1 + t)\big)^\frac{k}{2}\]

\noi
for any $t \geq 0$.
See \cite{ORTz} for details.

\end{remark}

\subsection{Almost sure global well-posedness with the shifted initial data}
\label{SUBSEC:GWP2b}

We conclude this paper by briefly discussing the proof of 
Corollary \ref{COR:GWP3}, using the Cameron-Martin theorem \cite{CM}.
For this purpose, we first  go over the definition
of abstract Wiener spaces introduced by
Gross \cite{GROSS}.
See also
Kuo \cite{KUO}.
Let $H$ be a real separable Hilbert space.
It is known that the Gauss measure
$\mu$ with the density
$d \mu = Z^{-1} e^{-\frac{1}{2}\|x\|_{H}^2}dx$
is only finitely additive if $\dim H = \infty$.

Let $\mathcal{P}$ denotes the collection
of all finite dimensional orthogonal projections of $H$.
A seminorm $|||\cdot|||$
on $H$ is said to be measurable
if, for any $\eps > 0$, there exists $\P_\eps \in \mathcal{P}$
such that
$\mu (|||\P x|||>\eps) < \eps$
for all $\P\in \mathcal P$ with $\P\perp \P_\eps$.
Let $B$ be the completion of $H$ with respect to this seminorm $|||\cdot|||$.
Then,
Gross \cite{GROSS} showed that
we can construct 
$\mu$  as  a countably additive
Gaussian measure on $B$.
In this case, we say that
the triplet $(B, H, \mu)$ is an {\it abstract Wiener space}.
The original Hilbert space $H$
is  referred to as a  {\it Cameron-Martin space}
or a reproducing kernel Hilbert space.

Let $(B, H, \mu)$ be an abstract Wiener space.
Then,  the Cameron-Martin Theorem \cite{CM}
states the following.

\noi
\begin{lemma}\label{LEM:CM}
 Given $h \in B$, define
a shifted measure $\mu_h$ by $ \mu_h(\, \cdot\,) := \mu(\, \cdot\,  - h)$.
Then,
the shifted measure $\mu_h$
and the original Gaussian measure
are equivalent (namely, mutually absolutely continuous)
if and only if
$h \in H$.
\end{lemma}

Let $(u_0^\o, u_1^\o)$ be as in \eqref{IV1}.
Then, its distribution 
is given by  $\muu_1 = \mu_1\otimes \mu_0$
defined in~\eqref{gauss0}, with the formal density:
\begin{align*}
 d \muu_1
   = Z^{-1} e^{-\frac 12 \| (u, v)\|_{\H^{1}}^2} d(u, v).
\end{align*}

\noi
In this context, the Cameron-Martin theorem states that 
the Gaussian measure $\muu_1$ 
and the shifted measure 
\begin{align}
\muu_{1, \vec v_0}(\,\cdot\,) : =  \muu_1(\, \cdot\,  - \vec v_0)
\label{mu2}
\end{align}
are equivalent
if and only if $\vec v_0 = (v_0, v_1) $ belongs to the Cameron-Martin space $\H^1(\T^2)$.

Now, fix $\vec v_0  = (v_0, v_1) \in \H^1(\T^2)$.
Let $\Phi[\vec v_0 + \vec u_0^\o] $ be the stochastic convolution
defined in~\eqref{phi3}.
With $\Phi_N[\vec v_0 + \vec u_0^\o]
= \P_N \Phi[\vec v_0 + \vec u_0^\o]$, we 
define the Wick power
$:\! (\Phi_N[\vec v_0 + \vec u_0^\o])^\l  \!:$ as in \eqref{Wick4}.
Then, from Sobolev's embedding (with large $r \gg 1$ such that $\eps r > 4$), 
\eqref{phi3}, \eqref{Wick4}, and Lemma \ref{LEM:bilin}\,(ii)
followed by 
Lemma \ref{LEM:bilin}\,(i)
and Sobolev's inequality, we have
\begin{align}
\begin{split}
\| :\! (\Phi_N & [\vec v_0 +  \vec u_0^\o])^\l  \!:\|_{C_T W^{- \eps, \infty}_x}
 \les \| :\! (V(t)\vec v_0 + \Phi_N[\vec u_0^\o])^\l  \!:\|_{C_T W^{- \frac \eps 2, r}_x}\\
&  \les  \sum_{j = 0}^\l
\|(V(t)\vec v_0)^j\|_{C_T W^{\frac \eps 2, p}_x}  \|:\!(\Phi_N[u_0^\o])^{\l-j}  \!:\|_{C_T W^{-\frac \eps 2, \infty}_x}\\
& \les  \sum_{j = 0}^\l
\|(V(t)\vec v_0)\|_{C_T W^{\frac \eps 2, \frac{4}{\eps}}_x}
\|(V(t)\vec v_0)\|_{C_T L^{\frac{4p(j-1)}{4 - \eps p}}_x}^{j-1}  \|:\!(\Phi_N[u_0^\o])^{\l-j}  \!:\|_{C_T W^{-\frac \eps 2, \infty}_x}\\
& \les  \sum_{j = 0}^\l
\|(V(t)\vec v_0)\|_{C_T H^1_x}^j
  \|:\!(\Phi_N[u_0^\o])^{\l-j}  \!:\|_{C_T W^{-\frac \eps 2, \infty}_x}\\
 & < \infty,  
\end{split}
\label{XX}
\end{align}

\noi
almost surely, 
thanks to the fact that $\vec v_0 \in \H^1(\T^2)$ and Lemma \ref{LEM:Psi}.
Here,  we chose $p \ge 1$ such that $\frac \eps 4 < \frac 1 p < \frac 1 r + \frac \eps 4$.
We also point out that the implicit constants in \eqref{XX} are independent of 
the cutoff size $N \in \N$.
A slight modification of \eqref{XX}, combined with Lemma \ref{LEM:Psi}, 
allows us to construct
the Wick power
$:\! (\Phi [\vec v_0 +  \vec u_0^\o])^\l  \!:$ by a limiting procedure.

Given $\vec w_0 = (w_0, w_1) \in \H^{-\eps}(\T^2)$, 
consider the following Cauchy problem:
\begin{equation}
\begin{cases}
\dt^2 v + (1-\Dl) v + D \dt v + \sum_{\ell = 0}^k \binom{k}{\ell} 
\wick{(\Phi
[\vec w_0])^\ell} v^{k-\ell} = 0\\
(v, \dt v)|_{t = 0} = (0, 0).
\end{cases}
\label{SNLWx}
\end{equation}

\noi
In the following, we write the solution $\vec v = (v , \dt v)$ to \eqref{SNLWx}
as $\vec v[\vec w_0]$ to signify the dependence on $\vec w_0$.
Here, the Wick power $\wick{(\Phi
[\vec w_0])^\ell}$, if it exists, is defined by a limiting procedure as above
with the divergent constant $\al_N$ in \eqref{sig2}.
For $\vec w_0 = \vec u_0^\o$ distributed by $\muu_1$, 
the Wick power exists almost surely.
Furthermore, 
 as discussed in 
Subsection \ref{SUBSEC:GWP2a},
by Bourgain's invariant measure argument, 
  the solution $v[\vec u_0^\o]$
to~\eqref{SNLWx} exists globally in time, almost surely.
In particular, by a close inspection of the argument (see, for example, \cite{ORTz}), we can also show that 
\begin{align}
\sup_{t \geq 0 } \frac{\| \vec v[\vec u_0^\o](t) \|_{\H^{1-\eps}}}{C(t)} < \infty, 
\label{XX1}
\end{align}

\noi
almost surely, for some deterministic increasing function $C(t)$.\footnote{A polynomial growth would suffice.}

We now define a set $\mathcal{A} \subset \H^{-\eps}(\T^2)$ by 
\begin{align}
\mathcal{A} = \Bigg\{ \vec w_0 \in \H^{-\eps}(\T^2): 
\sup_{t \geq 0 } \frac{\| \vec v[\vec w_0](t) \|_{\H^{1-\eps}}}{C(t)} < \infty\bigg\}.
\label{XX2}
\end{align}

\noi
From the construction, we see that  the map $\vec w_0 \mapsto
\{\wick{(\Phi
[\vec w_0])^\ell}\}_{\l = 1}^k$ is  measurable
(but is not continuous).
Furthermore, 
the solution map 
$( \Phi[\vec w_0], \wick{(\Phi[\vec w_0])^2}, \dots, 
\wick{(\Phi[\vec w_0])^k}) 
\mapsto \vec v[\vec w_0]$
is continuous; see, for example, Proposition \ref{PROP:LWP2}.
Hence, the map $w_0 \mapsto  \vec v[\vec w_0]$ is measurable
and thus the set $\mathcal A$ in \eqref{XX2} is a measurable set.
Therefore, from 
Lemma \ref{LEM:CM} with \eqref{XX1}
(namely $\muu_1 (\mathcal A) = 1$), 
we conclude that 
$\muu_{1, \vec v_0}(\mathcal{A}) = 1$,
where $\muu_{1, \vec v_0}$ is as in 
\eqref{mu2}.
Noting that 
$\muu_{1, \vec v_0}$ is the distribution
of the shifted initial data $\vec v_0 + \vec u_0^\o$, 
we conclude that the $\H^{1-\eps}$-norm
of the solution $\vec v $ to~\eqref{SNLW7b}
remains finite for finite times, almost surely.
This proves Corollary \ref{COR:GWP3}.

%
%
%

\begin{ackno}\rm
R.L.~and T.O.~were supported by the European Research Council (grant no.~864138 ``SingStochDispDyn'').
The authors would like to thank Leonardo Tolomeo 
for a helpful discussion.
They are also grateful to the anonymous referees for their  comments.

\end{ackno}

\end{document}